\theoremstyle{plain} 
\newtheorem{theorem}{Theorem}
\newtheorem{lemma}{Lemma}
\theoremstyle{definition}  
\newtheorem{example}{Example}
\newtheorem{remark}{Remark}
\numberwithin{equation}{section}
\renewcommand{\leq}{\leqslant}
\renewcommand{\geq}{\geqslant}
\title{
\textbf{Steady-state and periodic exponential turnpike property for optimal control problems in Hilbert spaces}
}
\begin{document}

\author{
Emmanuel Tr\'elat
\thanks{
Sorbonne Universit\'es, UPMC Univ Paris 06, CNRS UMR 7598, Laboratoire Jacques-Louis Lions, Institut Universitaire de France, F-75005, Paris, France. (emmanuel.trelat@upmc.fr)}
\quad\quad
Can Zhang
\thanks{School of Mathematics and Statistics, Wuhan University, 430072 Wuhan, China; Sorbonne Universit\'es, UPMC Univ Paris 06, CNRS UMR 7598, Laboratoire Jacques-Louis Lions, F-75005 Paris, France.
(zhangcansx@163.com)}
\quad\quad
Enrique Zuazua
\thanks{
DeustoTech, Fundaci\'on Deusto, Avda Universidades, 24, 48007, Bilbao, Basque Country, Spain;
Departamento de Matem\'aticas,
Universidad Aut\'onoma de Madrid,
28049 Madrid, Spain;
Facultad Ingenier\'ia, Universidad de Deusto, Avda. Universidades, 24, 48007 Bilbao, Basque Country, Spain;
Sorbonne Universit\'es, UPMC Univ Paris 06, CNRS UMR 7598, Laboratoire Jacques-Louis Lions,  F-75005, Paris, France.
 (enrique.zuazua@uam.es)}
}

\date{}
\maketitle

\begin{abstract}
In this work, we  study the steady-state (or periodic) exponential turnpike property of optimal control problems in Hilbert spaces. The turnpike property, which is essentially due to the hyperbolic feature of the Hamiltonian system resulting from the Pontryagin maximum principle, reflects the fact that, in large time, the optimal state, control and adjoint vector remain most of the time close to an optimal steady-state. A similar statement holds true as well when replacing an optimal steady-state by an optimal periodic trajectory.
To establish the result, we design an appropriate dichotomy transformation, based on solutions of the algebraic Riccati and Lyapunov equations. 
We illustrate our results with examples including linear heat and wave equations with periodic tracking terms.
\end{abstract}

\textbf{Keywords:} Exponential turnpike property, periodic tracking, periodic optimal controls, stability analysis, dichotomy transformation. \\

\textbf{AMS subject classifications:} 49J20, 49K20, 93D20.

\section{Introduction}

The turnpike property of optimal trajectories was firstly observed and investigated by  economists for  finite-dimensional  discrete-time optimal control problems (see, e.g., \cite{Mc}).  
The turnpike property reflects the fact that, for an optimal control problem for which the time horizon is large enough, its optimal solution spends most of the time remaining close to a referred turnpike, which is usually the optimal solution of a corresponding ``static" optimal control problem.
In the last decades, several turnpike theorems for  optimal control problems  have been obtained in a large number of works (see, for instance, \cite{Kokotovic, AL, CHJ, CarlsonBOOK, Grune1,
DorfmanSamuelsonSolow, Faulwasser1,G3, GTZ, LW, Mc, PZ1, PZ2, Rapaport, TZ1, Z2, Z3, Z4} and references therein), for discrete-time or continuous-time problems involving control systems in finite dimension.

The usual turnpike property is somehow a qualitative feature for the limiting structure of optimal solutions to the optimal control problem as the time horizon tends to infinity. 
A quantitative behavior of the turnpike property (see, e.g., \cite{Grune1,PZ1,TZ1}) is called the \emph{exponential turnpike property} if the optimal solution remains exponentially close to the referred turnpike for a sufficiently large time interval contained in the time horizon frame.
We also mention that the authors of \cite{CLLP} proved that the long-time average of the solution for a mean field game system converges exponentially to the solution of the associated stationary ergodic mean field game.

In the recent work \cite{PZ1},  the exponential turnpike property has been established for linear quadratic optimal control problems in finite dimension under the Kalman controllability rank condition, as well as for linear infinite-dimensional systems, covering the cases of linear heat and wave equations with internal controls under some observability  inequality assumptions. A local version, for semilinear heat equations, has been obtained in \cite{PZ2}.  

The authors of \cite{TZ1} established the (local) exponential turnpike property for general  finite-dimensional nonlinear control systems with general terminal constraint conditions, under some appropriate controllability and smallness assumptions. Not only the optimal state and control, but also the corresponding adjoint vector, resulting from the application of the Pontryagin maximum principle, were shown to remain exponentially close to an extremal triple for a corresponding static optimal control problem, except at the extremities of the time frame.  The main ingredient in \cite{TZ1} is an exponential dichotomy transformation early established in \cite{WK} to uncouple the two-point boundary value problem coming from the Pontryagin maximum principle, reflecting the hyperbolicity feature of the Hamiltonian system.

The objective of the present paper is to establish the exponential turnpike property 
for general infinite-dimensional nonlinear optimal control problems under exponential stabilizability and detectability assumptions (Theorem \ref{731-10} in Section \ref{sub2}). This extends to Hilbert spaces the main results of \cite{TZ1}. 
The result implies that, except at the beginning and at the end of the time frame, the optimal trajectory remains exponentially close to a steady-state, which is itself characterized as being a minimizer for an associated ``static optimal control problem". We stress that, as in \cite{TZ1}, our result establishes the exponential turnpike property, not only for the optimal state, but also for the optimal control and for the adjoint state (or costate) coming from the application of the Pontryagin maximum principle. The latter property is particularly useful in order to implement and initialize successfully a numerical shooting method.

As a second main result (Theorem \ref{periodictarget} in Section \ref{sec_periodic}), we consider linear quadratic optimal control problems with periodic tracking trajectories, i.e., linear  autonomous control systems (still in Hilbert spaces) with a quadratic cost in which the integrand involves a periodic tracking term.
We prove that, under exponential stabilizability and detectability assumptions, the optimal trajectory (also, control and adjoint state) remains exponentially close, except at the beginning and the end of the time frame, to a periodic optimal trajectory, which is characterized as being the optimal solution of an associated periodic optimal control problem.
We are not aware of any general result establishing such a periodic turnpike property, even in the finite-dimensional case. Note however that Samuelson, who is, by the way, the inventor of the turnpike phenomenon that he discovered in the context of a Von Neumann model in view of deriving efficient programs of capital accumulation (see \cite[Chapter 12]{DorfmanSamuelsonSolow}), established in \cite{Samuelson1976} a periodic turnpike property for a specific optimal growth problem in economics in which the integrand of the minimization functional is periodic. Periodic turnpike has been also considered in the recent paper \cite{Zanon} within the dissipativity context.

To prove the results, our approach takes advantage of the hyperbolic feature of optimality systems (see \cite{Rockafellar1973}) resulting from the Pontryagin maximum principle, as in \cite{TZ1}. However, the invertibility of solutions of the matrix algebraic Riccati equation played an important role in the argument of \cite{TZ1}, but to the best of our knowledge, this argument is in general not valid in the infinite dimensional setting, because invertibility is closely related to an exact observability inequality and thus this would be a too much restrictive assumption in view of applications. One of the main technical novelties of the present paper is to design an extensive dichotomy transformation to overcome this difficulty.

The paper is organized in the following way.  
In Section~\ref{main}, we present our main results and some applications.
In Theorem~\ref{731-10}, we establish the exponential turnpike property for general nonlinear autonomous optimal control problems in a Hilbert space, under appropriate stability assumptions and smallness conditions.
In Theorem~\ref{periodictarget}, we establish the exponential turnpike property for linear quadratic optimal control problems with periodic tracking terms, in which the referred turnpike  is a periodic optimal solution for a periodic optimal control problem. 
Sections~\ref{1252} and \ref{1251} are devoted to the proofs of the main results.

\section{Main results}\label{main}
Throughout the paper, given a Hilbert space $Z$, we denote by $\langle\cdot,\cdot\rangle_Z$ the usual inner product and by $\|\cdot\|_Z$ the corresponding norm. The notation $L(X,Y)$ designates the space of bounded linear operators from the Hilbert space $X$ to the Hilbert space $Y$. 

\subsection{Steady-state exponential turnpike for nonlinear optimal control problems}\label{sub2}
Let $X$ and $U$ be two Hilbert spaces, which are accordingly identified with their duals. 
We define hereafter the dynamical optimal control problem $(OCP^T)$, and then the corresponding static optimal control problem $(P_s)$ yielding the optimal steady-state around which the turnpike is expected. The exponential turnpike result is then stated in Theorem \ref{731-10}.

\paragraph{The dynamical optimal control problem $(OCP^T)$.}
For every $T>0$ and every $y_0\in X$, we consider  the optimal control problem
$$(OCP^T)\qquad\qquad\inf_{u(\cdot)\in L^2(0,T;U)}\;\;\;\;J^T(u(\cdot))= \int_0^Tf^0(y(t),u(t))\,dt,$$
where $y(\cdot)\in C([0,T];X)$ is the mild  solution\footnote{Recall that the form of mild solution is 
$$
y(t)=e^{At}y_0+\int_0^te^{A(t-s)}f(y(s),u(s))\,ds,\;\;\;\;t\in[0,T],
$$
where $e^{At}$ is the $C_0$ semigroup in $X$ with generator $ A: D(A)\subset X \rightarrow X$.} of
\begin{equation}\label{731-9}
\left\{
\begin{split}
&\dot{y}(t)=Ay(t)+f(y(t),u(t)),\;\;\;\text{for a.e.}\;\;t\in[0,T],\\
& y(0)=y_0,
\end{split}\right.
\end{equation}
corresponding to the control function $u(\cdot)\in L^2(0,T;U)$.
Here, $A:D(A)\subset X\rightarrow X$ is a linear (unbounded) operator generating a $C_0$ semigroup on $X$, and the function $f^0:X\times U\rightarrow \mathbb{R}$ and the mapping $f:X\times U\rightarrow X$ are assumed to be twice continuously 
Fr\'echet differentiable and globally Lipschitz continuous with respect to $y$ for each $u\in U$.

Existence of optimal controls for the problem $(OCP^T)$ is a classical issue (see, e.g., \cite[Chapter 3]{LiXunjing}) and is generally ensured under adequate convexity assumptions. Here, we assume that the problem $(OCP^T)$ exists  at least one optimal solution. Let $(y^T(\cdot),u^T(\cdot))$ be any of them.
According to the Pontryagin maximum principle in a Hilbert space (see \cite[Chapter 4]{LiXunjing}), there exists $\lambda^T(\cdot)\in C([0,T],X)$, called adjoint state or costate, such that
\begin{equation}\label{721-1}
\left\{
\begin{split}
\dot{y}^T(t)&=Ay^T(t)+H_\lambda(y^T(t),\lambda^T(t),u^T(t)),\;\;\;\;\;\;\;y^T(0)=y_0,\\
\dot{\lambda}^T(t)&=-A^*\lambda^T(t)-H_y(y^T(t),\lambda^T(t),u^T(t)),\;\;\lambda^T(T)=0,
\end{split}
\right.
\end{equation}
in the mild sense along $[0,T]$ and
\begin{equation}\label{721-2}
H_u(y^T(t),\lambda^T(t),u^T(t))=0,\;\;\;\text{for a.e.}\;\;t\in[0,T] ,
\end{equation}
where $A^*$ is the adjoint operator associated with $A$, with the domain $D(A^*)$, and
\begin{equation}\label{def_H}
H(y,\lambda,u)=\langle \lambda, f(y,u)\rangle_X -f^0(y,u).
\end{equation}
is the (normal) Hamiltonian of the optimal control problem. The index at $H$ above designates the partial derivative.

Note that, in $(OCP^T)$, we have assume that the final state $y(T)$ is free. This is for two reasons. The first is that this implies that there is no abnormal minimizer\footnote{In the general  statement  of Pontryagin maximum principle for optimal control problems, the extremal lift  is said to be {\it abnormal} (resp. {\it normal}) whenever the Lagrange multiplier associated with the cost is zero (resp. {\it nonzero}); See, e.g., \cite{TZ1} for more details.}. 
The second, which is the main one, is that, if $y(T)$ were to be fixed to some prescribed point $y_1\in H$, then the Pontryagin maximum principle would fail in general (see \cite[Chapter 4]{LiXunjing}). We refer the reader 
to Section~\ref{con} for further comments.

\paragraph{The static optimal control problem $(P_s)$.}
We consider the nonlinear constrained minimization problem
$$(P_s)\qquad\qquad\inf_{u\in U}\;\;J_s(u)= f^0(y,u),$$
where $y\in X$ is the corresponding weak  solution\footnote{Recall that the form of weak solution $y\in X$ is
$$
\langle y,A^*\varphi\rangle_X+\langle f(y,u),\varphi\rangle_X=0,\;\;\;\;\text{for any}\;\;\varphi\in D(A^*).
$$
} of 
$$Ay+f(y,u)=0.$$
Likewise, we  assume that the problem $(P_s)$ has at least one optimal solution (sufficient conditions ensuring existence are standard, see, e.g., \cite{ItoKunisch}). Let $(y_s,u_s)\in X\times U$ be any of them. We assume  that $(y_s,u_s)$ has a normal 
extremal lift. For instance, this normality does 
occur when $A$ is a uniformly elliptic operator and the nonlinearity $f$ satisfies a monotone condition with respect to the state variable (see e.g. \cite[Chapter 5, Theorem 1.2]{LiXunjing}).
According to the Lagrange multiplier rule (see \cite{ItoKunisch,LiXunjing,T}), there exists $\lambda_s\in X$ such that
\begin{equation}\label{721-5}
\left\{\begin{split}
Ay_s+H_\lambda(y_s,\lambda_s,u_s)=0,\\
-A^*\lambda_s-H_y(y_s,\lambda_s,u_s)=0\\
\end{split}\right.
\end{equation}
and
\begin{equation}\label{721-3}
H_u(y_s,\lambda_s,u_s)=0.
\end{equation}
Here, $H$ is the Hamiltonian function defined by \eqref{def_H}.

Note that $(y_s,\lambda_s,u_s)$ is an equilibrium point of the differential system \eqref{721-1}, satisfying the constraint \eqref{721-2}.
This remark is crucial in order to understand the turnpike property. Indeed, we are going to prove that, under appropriate assumptions, the equilibrium point $(y_s,\lambda_s,u_s)$ is hyperbolic, in the sense that, if we linearize the system of equations \eqref{721-1} around the point $(y_s,\lambda_s,u_s)$, then we obtain a linear system that is hyperbolic. This feature, adequately interpreted, implies the exponential turnpike property, locally around $(y_s,\lambda_s,u_s)$.

\paragraph{The exponential turnpike property.}
In what follows, we assume that the linear bounded operator $H_{uu}(y_s,\lambda_s,u_s)$ on $X$ is negative definite and boundedly invertible\footnote{This assumption is standard in optimal control theory, and it is usually referred  as a {\it strong Legendre condition}, see, e.g., \cite{Trelatbook}. It implies that the optimal control can be locally solved by the maximum condition in terms of the optimal state and adjoint state.}, and that the operator $H_{yu}H_{uu}^{-1}H_{uy}-H_{yy}$, taken at $(y_s,\lambda_s,u_s)$, is nonnegative.
At the point $ (y_s,\lambda_s,u_s)$, we define
\begin{equation}\label{727-10}
\mathcal{A}= A+H_{\lambda y}-H_{\lambda u}H_{uu}^{-1}H_{uy} ,
\end{equation}
and 
\begin{equation}\label{727-11}
\mathcal{C}^*\mathcal {C}= H_{yu}H_{uu}^{-1}H_{uy}-H_{yy} ,
\end{equation}
for some $\mathcal C\in L(X,X)$, where 
$$
H_{\lambda y}=H_{\lambda y}(y_s,\lambda_s,u_s),\;\;H_{\lambda u}=H_{\lambda u}(y_s,\lambda_s,u_s),\;\; H_{uu}=H_{uu}(y_s,\lambda_s,u_s) ,$$
and 
$$
H_{u y}=H_{uy}(y_s,\lambda_s,u_s)
,\;\;H_{y u}=H_{yu}(y_s,\lambda_s,u_s),\;\; H_{yy}=H_{yy}(y_s,\lambda_s,u_s).$$

\begin{theorem}\label{731-10}
Assume that the pair $(\mathcal A,H_{\lambda u})$ is exponentially stabilizable\footnote{The pair $(\mathcal A,H_{\lambda u})$ is said to be exponentially stabilizable if and only if there exists an operator $\mathcal K\in L(X, U)$ such that the operator  $\mathcal A+H_{\lambda u}\mathcal K$ is exponentially stable, i.e., the operator $\mathcal A+H_{\lambda u}\mathcal K$ generates a $C_0$ semigroup $(\mathcal S(t))_{t\geq0}$ satisfying $\|\mathcal S(t)\|_{L(X,X)}\leq ce^{-\nu t}$ for all $t\geq0$, for some $c>0$ and $\nu>0$.} and that the pair $(\mathcal A,\mathcal C)$ is exponentially detectable\footnote{The pair $(\mathcal A,\mathcal C)$ is said to be exponentially detectable if $(\mathcal A^*,\mathcal C^*)$ is exponentially stabilizable.}.
Then, there exist positive constants $\varepsilon$,  $\mu$ and $c$  such that for any $T>0$, if
\begin{equation}\label{727-88}
\|y_0-y_s\|_X+\|\lambda_s\|_X\leq \varepsilon,
\end{equation}
any optimal extremal triple  $(y^T(\cdot),u^T(\cdot),\lambda^T(\cdot))$ of $(OCP^T)$ has the exponential turnpike property
\begin{equation} \label{727-18}
\left\Vert y^T(t)-y_s\right\Vert_X+\left\Vert u^T(t)-u_s\right\Vert_U+\left\Vert \lambda^T(t)-\lambda_s\right\Vert_X\leq c \left( e^{-\mu t}+e^{-\mu(T-t)} \right) ,
\end{equation}
for almost every $t\in [0,T]$.
\end{theorem}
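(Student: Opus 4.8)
The plan is to linearize the optimality system \eqref{721-1}--\eqref{721-2} around the equilibrium $(y_s,\lambda_s,u_s)$ and to expose its hyperbolic structure through an explicit change of variables built from the algebraic Riccati operator and a companion Lyapunov operator. First I would set $\delta y=y^T-y_s$, $\delta\lambda=\lambda^T-\lambda_s$, $\delta u=u^T-u_s$, and use the strong Legendre condition (negative definiteness and bounded invertibility of $H_{uu}$) together with the implicit function theorem to solve $H_u=0$ for the control as a smooth map $u^T=\mathcal U(y^T,\lambda^T)$ with $\mathcal U(y_s,\lambda_s)=u_s$. Substituting this into \eqref{721-1} yields a first-order autonomous system $\dot z=\mathcal M z+\mathcal R(z)$ for $z=(\delta y,\delta\lambda)$, subject to the two-point conditions $\delta y(0)=y_0-y_s$ and $\delta\lambda(T)=-\lambda_s$, where
\begin{equation*}
\mathcal M=\begin{pmatrix}\mathcal A & \mathcal B\mathcal B^*\\ \mathcal C^*\mathcal C & -\mathcal A^*\end{pmatrix},\qquad \mathcal B:=H_{\lambda u}(-H_{uu})^{-1/2},\qquad \mathcal B\mathcal B^*=-H_{\lambda u}H_{uu}^{-1}H_{\lambda u}^*\geq 0,
\end{equation*}
and $\mathcal R$ is a locally Lipschitz remainder that is quadratically small near the origin, $\mathcal R(0)=0$, $D\mathcal R(0)=0$. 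Here $\mathcal M$ is the linearized Hamiltonian field, exponential stabilizability of $(\mathcal A,H_{\lambda u})$ is equivalent to that of $(\mathcal A,\mathcal B)$, and $(\mathcal A,\mathcal C)$ is exponentially detectable by hypothesis.

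The heart of the argument is to decouple the stable and anti-stable dynamics of $\mathcal M$. I would invoke standard infinite-dimensional LQ theory to produce the unique nonnegative stabilizing solution $E\in L(X,X)$ of the algebraic Riccati equation $\mathcal A^*E+E\mathcal A-E\mathcal B\mathcal B^*E+\mathcal C^*\mathcal C=0$, for which $\mathcal A_s:=\mathcal A-\mathcal B\mathcal B^*E$ generates an exponentially stable semigroup, $\|e^{\mathcal A_s t}\|_{L(X,X)}\leq c\,e^{-\nu t}$. The substitution $w=\delta\lambda+E\delta y$ triangularizes the system (the $\delta y$-equation becomes $\dot{\delta y}=\mathcal A_s\delta y+\mathcal B\mathcal B^*w$ and $w$ obeys $\dot w=-\mathcal A_s^*w$), and a second substitution $\delta y=\hat v+Fw$, with $F:=\int_0^\infty e^{\mathcal A_s\sigma}\mathcal B\mathcal B^*e^{\mathcal A_s^*\sigma}\,d\sigma\geq 0$ the bounded solution of the Lyapunov equation $\mathcal A_sF+F\mathcal A_s^*+\mathcal B\mathcal B^*=0$, fully decouples the linear part into $\dot{\hat v}=\mathcal A_s\hat v$ (forward stable) and $\dot w=-\mathcal A_s^*w$ (backward stable). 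The decisive point, and the technical novelty over the finite-dimensional treatment of \cite{TZ1} which relied on invertibility of the Riccati solution, is that both changes of variables are unipotent, hence bounded and boundedly invertible, with no need to invert either $E$ or $F$.

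Next I would establish the linear turnpike estimate uniformly in $T$. In the decoupled variables $\hat v$ decays forward like $e^{-\nu t}\|\hat v(0)\|$ and $w$ decays backward like $e^{-\nu(T-t)}\|w(T)\|$; propagating the boundary data through the flow and using $\delta y(T)=e^{\mathcal A_sT}(y_0-y_s)+F_T\,w(T)$ with $F_T:=\int_0^Te^{\mathcal A_s\sigma}\mathcal B\mathcal B^*e^{\mathcal A_s^*\sigma}\,d\sigma$ yields the operator identity $(I-EF_T)\,w(T)=-\lambda_s+E\,e^{\mathcal A_sT}(y_0-y_s)$. Since $E,F_T\geq 0$, the spectrum of $EF_T$ lies in $[0,\infty)$; I would show, using exponential detectability, that it is bounded away from $1$ uniformly in $T$, so that $I-EF_T$ is boundedly invertible with a bound independent of $T$ (a scalar computation already shows that $EF$ approaches $1$ exactly when detectability degenerates). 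This determines $w(T)$ and $\hat v(0)$ in terms of the data and gives, back in the original variables, the uniform bound $\|\delta y(t)\|_X+\|\delta\lambda(t)\|_X\leq c(e^{-\nu t}+e^{-\nu(T-t)})(\|y_0-y_s\|_X+\|\lambda_s\|_X)$.

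Finally I would close the nonlinear problem by a fixed-point argument in $C([0,T];X\times X)$ equipped with the weighted norm $\sup_{t}(e^{-\mu t}+e^{-\mu(T-t)})^{-1}\|z(t)\|_{X\times X}$ for some $0<\mu<\nu$. Writing the full system through the decoupled variation-of-constants formula with forcing $\mathcal R$ and imposing the two-point conditions defines a map whose fixed point is the sought extremal; the uniform linear estimate controls its linear part, while the quadratic smallness of $\mathcal R$ combined with the smallness hypothesis \eqref{727-88} makes the map a contraction on a ball of radius $O(\varepsilon)$, uniformly in $T$. This yields the estimate \eqref{727-18} for $y^T$ and $\lambda^T$; the control estimate then follows from $u^T=\mathcal U(y^T,\lambda^T)$ and the Lipschitz continuity of $\mathcal U$. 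I expect the main obstacle, and the step demanding the most care, to be precisely the construction of the bounded, boundedly invertible dichotomy transformation together with the uniform-in-$T$ invertibility of $I-EF_T$, carried out in the absence of any invertibility of the Riccati operator.
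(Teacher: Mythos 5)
Your architecture coincides with the paper's own proof in its two structural steps: linearize the Pontryagin system at $(y_s,\lambda_s,u_s)$, eliminate the control through the strong Legendre condition, and block-diagonalize the resulting Hamiltonian operator by the two unipotent transformations built from the stabilizing Riccati solution and the closed-loop Lyapunov (Gramian) operator; your composite change of variables is exactly the paper's $\mathcal T=\begin{pmatrix} I+\mathcal E\mathcal P & \mathcal E\\ \mathcal P & I\end{pmatrix}$ (your $E,F$ are the paper's $\mathcal P,-\mathcal E$), and avoiding any inversion of the Riccati or Lyapunov operators is indeed the point. The genuine gap is in the step where you make the estimate uniform in $T$: you reduce everything to the bounded invertibility of $I-EF_T$, uniformly in $T$, and for this you offer only (i) the remark that $\sigma(EF_T)\subset[0,\infty)$ because $E\geq 0$ and $F_T\geq 0$, and (ii) the assertion that exponential detectability keeps this spectrum uniformly away from $1$. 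Point (i) is true but carries no information for your purpose: $1$ lies in $[0,\infty)$, and a product of two nonnegative operators can have arbitrarily large spectral radius, so positivity excludes nothing. Point (ii) is precisely the crux and is left unproven; it is not a routine spectral fact. The scalar evidence you cite generalizes, in finite dimensions, through the identity $F=(E^{+}-E^{-})^{-1}$ involving the \emph{anti-stabilizing} Riccati solution $E^{-}$, and under the theorem's hypotheses $E^{-}$ need not exist in infinite dimensions: for the heat equation with bounded control operator, the time-reversed generator $-\mathcal A$ does not even generate a $C_0$ semigroup, let alone admit a stabilizing feedback. So there is no obvious operator-theoretic route from detectability to a uniform spectral gap at $1$.

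What closes this hole in the paper is an energy/duality estimate (claim \eqref{722-4}, modeled on Lemma~\ref{upperbound}): testing the two linearized equations against solutions of the exponentially stable systems furnished by the stabilizability and detectability hypotheses, and combining with the cross identity obtained by pairing $\delta y$ with $\delta\lambda$ and integrating over $[0,T]$, one obtains $\Vert\delta y(T)\Vert_X+\Vert\delta\lambda(0)\Vert_X\leq c\left(\Vert\delta y(0)\Vert_X+\Vert\delta\lambda(T)\Vert_X\right)+o(R)$ with $c$ independent of $T$. This bounds the decoupled boundary data $v(0)$ and $w(T)$ directly, and it is essentially equivalent to the uniform bound on $(I-EF_T)^{-1}$ you are after (injectivity and surjectivity with a $T$-independent bound both follow from it together with linear-quadratic solvability of the two-point problem). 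Your scheme can therefore be repaired, but only by proving this estimate or an equivalent of it, which is the actual content of the key lemma your sketch skips. A secondary issue: your contraction argument produces \emph{one} solution of the two-point problem in a small weighted ball, whereas the statement concerns \emph{any} optimal extremal triple; to conclude, you must still show a priori that the given extremal lies in that ball, which is the bootstrap the paper performs through the a priori hypotheses \eqref{722-6}, verified a posteriori under the smallness condition \eqref{727-88}.
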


\begin{remark}
The above theorem extends the result established in  \cite{TZ1} for general finite-dimensional optimal control problems. It is as well local, requiring the smallness assumption \eqref{727-88}. Ruling out this assumption would require to have a knowledge of global properties of the dynamics.
\end{remark}

\begin{remark}
When the control system \eqref{731-9} is a controlled semilinear heat equation, the exponential turnpike property has been established in \cite{PZ2} for one only of the solutions of the  optimality system \eqref{721-1}-\eqref{721-2} under some smallness conditions. Note that Theorem~\ref{731-10} gives the conclusion for all optimal extremal solutions under certain smallness conditions.
\end{remark}

\begin{remark}
Note that we assume that  $f$ and $f^0$ are $C^2$-smooth and 
globally Lipschitz with respect to the state variable. Under such a globally Lipschitz condition, we would get the existence and uniqueness of solutions for a given Cauchy problem.   To ensure that we can obtain the linearized  system
of the optimality systems resulting from the Pontryagin maximum principle for $(OCP^T)$,  the $C^2$-regularity of the dynamic seems to be necessary. In practice, it is often the case that, even though globally Lipschitz properties are not satisfied, we can however reduce the problem to the globally Lipschitz situation. Indeed, if we know in advance that solutions under consideration remain in a bounded set, and if the dynamics and their derivatives are bounded on bounded sets, then one can change $f$ and $f^0$ by multiplying them by zero at infinity, so that they are smooth and of compact support, with this compact support containing all solutions of interest. With such a reasoning, we reduce the problem to dynamics that are globally Lipschitz.

Here, however, it is just required to assume that $(OCP^T)$ exists at least one solution, having a normal extremal lift. 
Then, similar things can be also done in each particular instance,  such as the cubic semilinear heat equation in Example~\ref{nonlinearex} below.

\end{remark}

\begin{example}\label{nonlinearex}
Let $\Omega\subset\mathbb{R}^3$ be an open and bounded domain 
with a $C^2$ boundary, and let $\omega\subset\Omega$ be a nonempty open subset. Denote by $\chi_\omega$ the characteristic function of the subset $\omega$. 
Given $T>0$, $y_d\in L^2(\Omega)$ and $y_0\in L^2(\Omega)$, we consider the optimal control problem 
\begin{equation*}
\text{Minimize}\;\;\; \frac{1}{2}\int_0^T\int_{\Omega}
|y(x,t)-y_d(x)|^2\,dx\,dt+\frac{1}{2}\int_0^T\int_{\omega}|u(x,t)|^2\,dx\,dt,
\end{equation*}
subject to $(y,u)\in C([0,T];L^2(\Omega))\times L^2(0,T;L^2(\Omega))$ satisfying the semilinear heat
equation with a cubic nonlinearity
\begin{equation*}\left\{
\begin{split}
&y_{t}-\triangle y+y^3=\chi_{\omega}u\;\;\;\;\text{in}\;\;\Omega\times(0,T),\\
&y=0\;\;\;\;\text{on}\;\;\partial \Omega\times(0,T),\\
&y(0)=y_0 \;\;\;\;\text{in}\;\;\Omega.
\end{split}\right.
\end{equation*}
This semilinear heat equation is well-posed. More precisely, given $y_0\in L^2(\Omega)$ and $u\in L^2(\omega\times(0,T))$, there exists a unique solution $y\in C([0,T];L^2(\Omega))\cap L^2(0,T;H^1_0(\Omega))$.  
Moreover, for each $T>0$,
there exists at least one optimal solution $(y^T(\cdot),u^T(\cdot))$. Meanwhile, there is an adjoint sate $\lambda^T(\cdot)$ 
such that $(y^T(\cdot),\lambda^T(\cdot))$ satisfies the optimality systems (cf., e.g., \cite[Section 3.1]{PZ2})
\begin{equation*}\left\{
\begin{split}
&y^T_{t}-\triangle y^T+(y^T)^3=\chi_{\omega}\lambda^T\;\;\;\;\text{in}\;\;\Omega\times(0,T),\\
&y^T=0\;\;\;\;\text{on}\;\;\partial \Omega\times(0,T),\\
&y^T(0)=y_0 \;\;\;\;\text{in}\;\;\Omega,\\
& \lambda_t^T+\Delta\lambda^T-3(y^T)^2\lambda^T=y^T-y_d\;\;\;\;\text{in}\;\;\Omega\times(0,T),\\
&\lambda^T=0\;\;\;\;\text{on}\;\;\partial \Omega\times(0,T),\\
&\lambda^T(T)=0 \;\;\;\;\text{in}\;\;\Omega,
\end{split}\right.
\end{equation*}
and 
$$u^T(t)=\chi_\omega\lambda^T(t)\;\;\;\;\text{for almost every}\;\; t\in(0,T).$$

The corresponding static optimal control problem is 
\begin{equation*}\text{Minimize}\;\;\; \frac{1}{2}\int_{\Omega}
|y(x)-y_d(x)|^2\,dx+\frac{1}{2}\int_{\omega}|u(x)|^2\,dx,
\end{equation*}
subject to $(y,u)\in L^2(\Omega)\times L^2(\Omega)$ satisfying 
\begin{equation*}\left\{\begin{split}
&-\triangle y+y^3=\chi_\omega u\;\;\text{in}\;\;\Omega,\\
& y=0\;\; \text{on}\;\;\partial\Omega.
\end{split}\right.\end{equation*}
Obviously, any minimizer $(y_s,u_s)$ of this minimization problem satisfies 
$$
\|y_s-y_d\|^2_{L^2(\Omega)}+\| u_s\|^2_{L^2(\omega)}\leq \|y_d\|_{L^2(\Omega)}^2.
$$ 
Moreover, there exists $\lambda_s\in L^2(\Omega)$ such that 
\begin{equation*}\left\{
\begin{split}
&-\triangle \lambda_s+3y_s^2\lambda_s+y_s-y_d=0 \;\;\text{in}\;\;\Omega,\\
& \lambda_s=0\;\; \text{on}\;\;\partial\Omega.
\end{split}\right.\end{equation*}
In view of applying the Sobolev imbedding inequality and the elliptic regularity theory, we note that there exists $c=c(\Omega)>0$ such that 
\begin{equation*}
\|{y}_s\|_{H_0^1(\Omega)}+\|{\lambda}_s\|_{H^1_0(\Omega)}\leq c\|y_d\|_{L^2(\Omega)}.
\end{equation*}
Hence, there exists $\varepsilon >0$ such that the condition \eqref{727-88} in Theorem~\ref{731-10} holds whenever
$y_d$ and $y_0$ have small enough $L^2$-norms (see also more details in \cite[Section 3.2]{PZ2}).

We apply Theorem~\ref{731-10} with $X= L^2(\Omega)$, $U=L^2(\omega)$, $A=\triangle$ defined on the domain $D(A)=H^2(\Omega)\cap H_0^1(\Omega)$, $f(y,u)=-y^3+\chi_\omega u$, and $f^0(y,u)=\frac{1}{2}\|y-y_d\|_{L^2(\Omega)}^2+\frac{1}{2}\|u\|_{L^2(\omega)}^2$.
Note that
$$
\mathcal A=A+O(\|y_d\|_{L^2(\Omega)}),\;\; H_{\lambda u}=\chi_\omega I+O(\|y_d\|_{L^2(\Omega)}),\;\;\mathcal C= I+O(\|y_d\|_{L^2(\Omega)}),
$$
where $I$ is the identity operator on $L^2(\Omega)$.
Since the semigroup generated by $A$ is exponentially stable,  by perturbation, the pairs $(\mathcal A,H_{\lambda u})$ and $(\mathcal A^*,\mathcal C^*)$ are also exponentially stabilizable whenever the $L^2$-norm of $y_d$ is sufficiently small (see, e.g., \cite[Chapter 3, Theorem 1.1]{Pa})).   
Therefore, the exponential turnpike property is satisfied provided $\Vert y_d\Vert_{L^2}$ and  $\Vert y_0\Vert_{L^2}$ are small enough.
\end{example}

\medskip

\subsection{Periodic exponential turnpike for linear quadratic problems with periodic tracking trajectory}\label{sec_periodic}
Let $X$, $U$ and $V$ be Hilbert spaces identified with their respective duals.
As in the previous section, we first define the dynamical optimal control problem $(LQ^T)$, formulated as a linear-quadratic optimal control problem with a periodic tracking trajectory.
Since the cost functional depends on $t$ in a periodic way, we replace the static optimal control problem with a periodic optimal control problem $(LQ^\Pi)$, whose solution yields the referred turnpike. The exponential turnpike result 
is then stated in Theorem \ref{periodictarget}.

\paragraph{The dynamical optimal control problem $(LQ^T)$.}

Given any $y_0\in X$, we consider the linear control system
\begin{equation}\label{61720}
\left\{
\begin{split}
&\dot y(t) = Ay(t) + Bu(t),\;\;t>0,\\
&y(0)=y_0,
\end{split}
\right.
\end{equation}
where the operator $A:D(A)\subset X\rightarrow X$  generates a $C_0$ semigroup in $X$, and $B\in L(U,X)$ is a linear bounded control operator.
 
Let $\Pi>0$ be a fixed positive real number.  Let $y_d(\cdot)\in C([0,+\infty);X)$ and
$u_d(\cdot)\in L^2_{loc}(0,+\infty;U)$ be two $\Pi$-periodic functions such that
$$
y_d(t+\Pi)=y_d(t),\;\;u_d(t+\Pi)=u_d(t),\;\;\;\;\text{for a.e.}\;\;\;t>0.
$$ 
Let $C\in L(X,V)$ be a linear bounded observation operator, and let $Q\in L(X,X)$ be an invertible positive definite operator.
For any $T>0$, we consider the optimal control problem
\begin{equation*}\label{xiao-1}
(LQ^T)\qquad\qquad\inf_{u(\cdot)\in L^2(0,T;U)}J^T(u(\cdot))= 
\frac{1}{2}\int_0^T \left( \|C(y(t)-y_d(t))\|_V^2 + \Vert Q^{1/2}(u(t)-u_d(t))\Vert_U^2\right) dt,
\end{equation*}
where $y(\cdot)\in C([0,T];X)$ is the solution of \eqref{61720} with the control $u(\cdot)$.  
In the literature, this minimization problem is usually referred to as a linear quadratic optimal problem with a periodic tracking trajectory.  

The problem $(LQ^T)$ has a unique optimal solution $(y^T(\cdot),u^T(\cdot))$.
Moreover, following \cite{ItoKunisch,Lions} or \cite[Chapter 4, Theorem 1.6]{LiXunjing}),  there exists $\lambda^T(\cdot)\in C([0,T];X)$
such that 
\begin{equation}\label{j152}
\left\{
\begin{split}
\dot{y}^T(t) &= Ay^T(t)+BQ^{-1}B^*\lambda^T(t)+Bu_d(t),\qquad y^T(0)=y_0, \\
\dot{\lambda}^T(t) &= C^*Cy^T(t)-A^*\lambda^T(t)-C^*Cy_d(t),\qquad \lambda^T(T)=0,
\end{split}
\right.
\end{equation}
in the mild sense along $[0,T]$ and
\begin{equation*}\label{j151}
u^T(t)=u_d(t)+Q^{-1}B^*\lambda^T(t), \;\;\text{a.e.}\;\;t\in[0,T].
\end{equation*}

\paragraph{The periodic optimal control problem $(LQ^\Pi)$.}
In the present case where the tracking terms in the cost functional depend on $t$, the turnpike property cannot anymore be captured by a corresponding static optimal control problem.
Instead, we consider the periodic optimal control problem
\begin{equation*}\label{xiao-2}
(LQ^\Pi)\qquad\qquad\text{inf}\;\; J^\Pi(y(\cdot),u(\cdot))= 
\frac{1}{2}\int_0^\Pi \left( \|C(y(t)-y_d(t))\|_V^2
+\Vert Q^{1/2}(u(t)-u_d(t))\Vert_U^2\right) dt,
\end{equation*}
where $(y(\cdot),u(\cdot))\in C([0,\Pi];X)\times L^2(0,\Pi;U)$ is a mild solution of
\begin{equation*}
\left\{
\begin{split}
&\dot{y}(t)=Ay(t)+Bu(t),\;\;t\in[0,\Pi],\\
&y(0)=y(\Pi).\\
\end{split}\right.
\end{equation*}
Existence and uniqueness for such periodic optimal control problems, as well as first-order necessary conditions for optimality, have been widely studied in the existing literature (see, for instance, \cite{Barbu,EG,TWZ} or \cite[Chapter 4, Proposition 5.2]{LiXunjing} and references therein).
Since the problem $(LQ^\Pi)$ is convex, it is well known that $(y^\Pi(\cdot),u^\Pi(\cdot))$ is an optimal pair for $(LQ^\Pi)$ if and only if there exists an adjoint state $\lambda^\Pi\in C([0,\Pi];X)$ such that
\begin{equation}\label{xiaoextremalsystLQ}
\left\{
\begin{split}
\dot{y}^\Pi(t) &= Ay^\Pi(t)+BQ^{-1}B^*\lambda^\Pi(t)+Bu_d(t),\qquad y^\Pi(0)=y^\Pi(\Pi), \\
\dot{\lambda}^\Pi(t) &= C^*Cy^\Pi(t)-A^*\lambda^\Pi(t)-C^*Cy_d(t),\qquad \lambda^\Pi(0)=\lambda^\Pi(\Pi),
\end{split}
\right.
\end{equation}
in the mild sense along $[0,\Pi]$, and
\begin{equation}\label{xiao61718}
u^\Pi(t)=u_d(t)+Q^{-1}B^*\lambda^\Pi(t),\;\;\text{a.e.}\;\;t\in[0,\Pi].
\end{equation}

\paragraph{The periodic exponential turnpike property.}

\begin{theorem}\label{periodictarget}
Assume that the pair $(A,B)$ is exponentially stabilizable and that the pair $(A,C)$ is exponentially detectable. Then:
\begin{itemize}
\item The problem $(LQ^\Pi)$ has a unique solution $(y^\Pi(\cdot),u^\Pi(\cdot))$, which has a unique extremal lift $(y^\Pi(\cdot),u^\Pi(\cdot),\lambda^\Pi(\cdot))$ solution of \eqref{xiao61718} and \eqref{xiaoextremalsystLQ}. We extend it by $\Pi$-periodicity over $[0,+\infty)$.

\item There exist positive constants $c$ and $\nu$ such that, for any $T > 0$,
\begin{equation}\label{10271}
\left\Vert y^T(t)-y^\Pi(t)\right\Vert_X+\left\Vert u^T(t)-u^\Pi(t)\right\Vert_U+\left\Vert \lambda^T(t)-\lambda^\Pi(t)\right\Vert_X
\leq c\left( e^{-\nu t}+e^{-\nu (T-t)}\right) ,
\end{equation}
for almost every $t\in[0,T]$.
\end{itemize}
\end{theorem}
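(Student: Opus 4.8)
The plan is to exploit that the finite-horizon extremal system \eqref{j152} and the periodic extremal system \eqref{xiaoextremalsystLQ} are driven by exactly the same affine Hamiltonian dynamics, so that their difference solves the associated homogeneous system. Set $\delta y = y^T - y^\Pi$ and $\delta\lambda = \lambda^T - \lambda^\Pi$. Subtracting \eqref{xiaoextremalsystLQ} from \eqref{j152}, the forcing terms $Bu_d$ and $-C^*Cy_d$ cancel and $(\delta y,\delta\lambda)$ solves the homogeneous Hamiltonian system
\begin{equation*}
\dot{\delta y} = A\,\delta y + BQ^{-1}B^*\,\delta\lambda,\qquad \dot{\delta\lambda} = C^*C\,\delta y - A^*\,\delta\lambda,
\end{equation*}
with the mixed boundary conditions $\delta y(0) = y_0 - y^\Pi(0)$ and $\delta\lambda(T) = -\lambda^\Pi(T)$. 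Both boundary data are bounded uniformly in $T$: the first is fixed, and the second is controlled by $\max_{[0,\Pi]}\|\lambda^\Pi\|_X$ thanks to $\Pi$-periodicity. Since the problem is now linear, no smallness assumption is needed, in contrast with Theorem~\ref{731-10}. It then remains to prove that any bounded solution of this homogeneous two-point problem satisfies the two-sided exponential estimate in \eqref{10271}; the estimate for the control follows directly from \eqref{xiao61718}, since $u^T-u^\Pi = Q^{-1}B^*\delta\lambda$.

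To analyze the homogeneous system I would set up the dichotomy transformation announced in the introduction, combining the algebraic Riccati and Lyapunov equations. Under exponential stabilizability of $(A,B)$ and detectability of $(A,C)$, the algebraic Riccati equation $A^*P + PA - PBQ^{-1}B^*P + C^*C = 0$ admits a nonnegative stabilizing solution $P$, meaning that $\mathcal A_P := A - BQ^{-1}B^*P$ generates an exponentially stable $C_0$ semigroup. Introducing $\psi := \delta\lambda + P\delta y$, a direct computation using the Riccati identity shows that $\psi$ decouples and obeys the anti-stable equation $\dot\psi = -\mathcal A_P^*\psi$, while $\delta y$ obeys $\dot{\delta y} = \mathcal A_P\,\delta y + BQ^{-1}B^*\psi$. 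To remove the remaining coupling I would introduce $\phi := \delta y - L\psi$, where $L\geq 0$ solves the Lyapunov equation $\mathcal A_P L + L\mathcal A_P^* + BQ^{-1}B^* = 0$ (well defined as $L = \int_0^\infty e^{\mathcal A_P t}BQ^{-1}B^* e^{\mathcal A_P^* t}\,dt$, since $\mathcal A_P$ is stable); one then gets $\dot\phi = \mathcal A_P\phi$. The crucial point, which is precisely what fails for the two-Riccati construction used in the finite-dimensional setting of \cite{TZ1}, is that the resulting change of variables $(\delta y,\delta\lambda)\mapsto(\phi,\psi)$ is \emph{automatically} boundedly invertible: its block matrix has an invertible (unit) Schur complement, so no invertibility of $P$ (which would amount to an exact observability assumption) is required.

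With the system decoupled into a stable flow $\dot\phi = \mathcal A_P\phi$ and an anti-stable flow $\dot\psi = -\mathcal A_P^*\psi$, the solution is governed by $\phi(t) = e^{\mathcal A_P t}\phi(0)$, decaying like $e^{-\nu t}$, and $\psi(t) = e^{\mathcal A_P^*(T-t)}\psi(T)$, decaying like $e^{-\nu(T-t)}$, where $\nu>0$ is the stability margin of $\mathcal A_P$. Translating the two boundary conditions into these coordinates yields a $2\times2$ linear system for the unknowns $\phi(0)$ and $\psi(T)$ whose off-diagonal blocks have size $\|e^{\mathcal A_P T}\|$, hence are exponentially small, and whose diagonal blocks are $I$ and $I - PL$. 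Solving it and inserting the decay estimates gives $\|\delta y(t)\|_X + \|\delta\lambda(t)\|_X \leq c(e^{-\nu t} + e^{-\nu(T-t)})$ uniformly in $T$, from which \eqref{10271} follows. I expect the main obstacle to be exactly the uniform bounded invertibility of the matching operator $I - PL$: in infinite dimensions $PL$ is not self-adjoint and one cannot invoke finite-dimensional determinant or rank arguments, so one must show directly, combining the Riccati and Lyapunov relations, that the spectral radius of $PL$ is strictly less than $1$ (equivalently $P < L^{-1}$ on the range of $L$). This is the technical heart of the proof and the reason the Lyapunov-based ``extensive'' transformation is needed.

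Finally, for the first assertion of the theorem I would establish existence and uniqueness for $(LQ^\Pi)$ by applying the same constant-coefficient transformation to the affine periodic system \eqref{xiaoextremalsystLQ}. Since the transformation is time-independent and the forcing is $\Pi$-periodic, the transformed components satisfy $\dot\phi = \mathcal A_P\phi + g_1$ and $\dot\psi = -\mathcal A_P^*\psi + g_2$ with $\Pi$-periodic $g_1,g_2$, and the periodicity constraints $y^\Pi(0)=y^\Pi(\Pi)$, $\lambda^\Pi(0)=\lambda^\Pi(\Pi)$ become the requirement that $\phi$ and $\psi$ be $\Pi$-periodic. Each block is then uniquely solvable: $I - e^{\mathcal A_P\Pi}$ is invertible because $\mathcal A_P$ is exponentially stable, and $I - e^{-\mathcal A_P^*\Pi}$ is invertible because $-\mathcal A_P^*$ is anti-stable, so that $1$ lies outside the respective spectra. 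This produces a unique periodic extremal $(y^\Pi(\cdot),\lambda^\Pi(\cdot))$; combined with the strict convexity of $(LQ^\Pi)$ and the necessary-and-sufficient optimality system \eqref{xiaoextremalsystLQ}--\eqref{xiao61718}, it yields the claimed unique optimal pair with unique extremal lift, which we then extend by $\Pi$-periodicity over $[0,+\infty)$.
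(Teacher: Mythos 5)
Your construction is, up to notation, exactly the one in the paper: your $\psi=\delta\lambda+P\delta y$ followed by $\phi=\delta y-L\psi$ is the composition $\mathcal T_3\circ\mathcal T_1$ of Lemma~\ref{dichotomy}, your Lyapunov solution $L$ being $-E$ with $E$ defined in \eqref{731-2}; the decoupled system is \eqref{dec}, and your argument for the first bullet point (solving the two decoupled affine periodic equations) is Lemma~\ref{lemma_explicit}. One small repair is needed there: in infinite dimensions $e^{-\mathcal A_P^{*}\Pi}$ does not exist, since $A-BQ^{-1}B^{*}P$ generates only a semigroup, not a group; the correct statement is that $I-S(\Pi)^{*}$ is boundedly invertible because the spectral radius of $S(\Pi)^{*}$ is at most $e^{-\nu\Pi}<1$, the $\psi$-equation being solved backward in time.

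The genuine gap is in the second bullet point, and you have flagged it yourself without closing it. Your route reduces \eqref{10271} to solving the two-point system
\begin{equation*}
\begin{pmatrix} I & L S(T)^{*} \\ -P S(T) & I-PL \end{pmatrix}
\begin{pmatrix} \phi(0)\\ \psi(T)\end{pmatrix}
=
\begin{pmatrix} \delta y(0)\\ \delta\lambda(T)\end{pmatrix},
\end{equation*}
with inverse bounded uniformly in $T$, which requires $I-PL$ to be boundedly invertible; you only assert that ``one must show'' the spectral radius of $PL$ is less than $1$, and no proof is given. This is not a routine verification from the Riccati and Lyapunov identities: in finite dimensions, when $L$ is invertible, one checks that $I-PL=-N(P-N)^{-1}$ where $N=P-L^{-1}$ is the \emph{anti-stabilizing} solution of \eqref{riccati}, so invertibility of $I-PL$ is equivalent to invertibility of a Riccati-type solution --- precisely the kind of hypothesis, tied to exact observability, that the paper explains is unavailable in Hilbert spaces and is the reason the argument of \cite{TZ1} had to be redesigned. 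The paper circumvents the issue entirely by never solving a boundary-value system: the extremal $(y^T(\cdot),\lambda^T(\cdot))$ already exists, coming from the optimization problem $(LQ^T)$, so all that is needed is an \emph{a priori} bound on the unknown boundary values, and that is Lemma~\ref{upperbound}, a multiplier/energy estimate proved directly from stabilizability and detectability, giving $\Vert\delta y(T)\Vert_X+\Vert\delta\lambda(0)\Vert_X\leq c\,(\Vert\delta y(0)\Vert_X+\Vert\delta\lambda(T)\Vert_X)$ with $c$ independent of $T$. Substituting this into $\phi(0)=(I+EP)\delta y(0)+E\delta\lambda(0)$ and $\psi(T)=P\delta y(T)+\delta\lambda(T)$ bounds $\phi(0)$ and $\psi(T)$ uniformly in $T$, and the exponential decay of the two decoupled flows, together with the bounded invertibility of the transformation itself, yields \eqref{10271}. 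To complete your proof you would either have to establish the invertibility of $I-PL$ --- whose natural proof in this generality goes through an estimate of the type of Lemma~\ref{upperbound} combined with the solvability of the boundary-value problem, making the detour circular --- or simply replace the matrix-inversion step by that a priori estimate, as the paper does.
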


\begin{remark}\label{explicit}
From the proof of the theorem, we infer the following explicit formulas in order to compute the optimal triple $(y^\Pi(\cdot),u^\Pi(\cdot),\lambda^\Pi(\cdot))$.
We claim that
$$
y^\Pi(t)=z(t)- E q(t),\quad
\lambda^\Pi(t)=-Pz(t)+(I+PE)q(t),\quad
u^\Pi(t)=u_d(t)+Q^{-1}B^*\lambda^\Pi(t),
$$
for almost every $t\in[0,\Pi]$, where:
\begin{itemize}
\item $P\in L(X,X)$ is the unique nonnegative definite self-adjoint operator solution of the operator algebraic Riccati equation
$$A^*P+PA-PBQ^{-1}B^*P+C^*C=0,$$
or equivalently,
\begin{equation*}
2\langle PAx,x\rangle_X-\langle PBQ^{-1}B^*Px, x\rangle_X+\langle Cx, Cx\rangle_V=0, \;\;\;\forall x\in D(A);
\end{equation*}
\item $E\in L(X,X)$ is defined by
$$E= -\int_0^{+\infty} S(t)BQ^{-1}B^*S(t)^*\,dt ,$$
where $(S(t))_{t\geq 0}$ is the (exponentially stable) $C_0$ semigroup generated by the 
operator $A-BQ^{-1}B^*P$;
\item $z(t)$ and $q(t)$ are the $\Pi$-periodic trajectories defined by
\begin{multline*}
z(t)=S(t)(I-S(\Pi)\big)^{-1}\int_0^\Pi S(\Pi-\tau)\big((I+ E P)Bu_d(\tau)- EC^*Cy_d(\tau)\big)\,d\tau
\\+\int_0^tS(t-\tau)\big((I+E P)Bu_d(\tau)-EC^*Cy_d(\tau)\big)\,d\tau,
\end{multline*}
and
\begin{multline*}
q(t)=S(\Pi-t)^*(I-S(\Pi)^*\big)^{-1}\int_0^\Pi S(\Pi-\tau)^*\big(-PBu_d(\Pi-\tau)+C^*Cy_d(\Pi-\tau))\big)\,d\tau
\\+\int_0^{\Pi-t}S(\Pi-t-\tau)^*\big(-PBu_d(\Pi-\tau)+C^*Cy_d(\Pi-\tau)\big)\,d\tau,
\end{multline*}
for every $t \in[0,\Pi]$.
\end{itemize}
These facts are proved in Section \ref{1252} (more specifically, see Lemma \ref{lemma_explicit}), as well as Theorem \ref{periodictarget}.
\end{remark}

\begin{remark}
As it can be seen from the proof of the theorem, the best exponential decay constant $\nu$ in \eqref{10271} can be characterized as the exponential stability rate for a $C_0$ semigroup resulting from the operator algebraic Riccati equation. 
\end{remark}

It follows from Theorem~\ref{periodictarget} that there exists $\eta>0$ such that, for any initial condition $y_0$, the optimal triple $(y^T(\cdot),u^T(\cdot),\lambda^T(\cdot))$solution of $(LQ^T)$ is exponentially close to the periodic optimal triple $(y^\Pi(\cdot),u^\Pi(\cdot),\lambda^\Pi(\cdot))$ solution of $(LQ^\Pi)$ over the middle time interval $[\eta, T-\eta]$ whenever $T$ is large enough. 
Boundary layers may occur at $t=0$ and $t=T$ for the optimality system, and the exponential closeness is observed in the middle piece of optimal trajectories.
This result means that, except at the extremities of the time frame, the optimal trajectory, as well as the optimal control and the associated adjoint state, is almost $\Pi$-periodic.
It is worth mentioning that similar results have been discussed in \cite{AL}, \cite{Samuelson1976} and \cite[Chapter 6]{Z2} for some finite-dimensional optimal control problems.

\begin{example}
Let $\Omega\subset\mathbb{R}^n$ ($n\geq1$) be an open and bounded domain 
with a $C^2$ boundary, and let $\omega_i\subset\Omega$, $i=1,2$, be nonempty open subsets. Denote by $\chi_{\omega_i}$, $i=1,2$, the associated characteristic function.
Let $y_d\in C([0,+\infty);L^2(\Omega))$ be a periodic tracking trajectory, satisfying $y_d(t,\cdot)=y_d(t+1,\cdot)$ for any $t\geq 0$.
Given any $T>0$, we consider the optimal control problem
\begin{equation*}\text{Minimize}\;\;\; \frac{1}{2}\int_0^T\int_{\omega_1}
|y(x,t)-y_d(x,t)|^2\,dx\,dt+\frac{1}{2}\int_0^T\int_{\omega_2}|u(x,t)|^2\,dx\,dt,
\end{equation*}
over all possible $(y,u)\in C([0,T];L^2(\Omega))\times L^2(0,T;L^2(\Omega))$ satisfying (in the mild sense)
\begin{equation*}\left\{
\begin{split}
&y_{t}-\triangle y +a(x)y=\chi_{\omega_2}u\;\;\;\;\text{in}\;\;\Omega\times(0,T),\\
&y=0\;\;\;\;\text{on}\;\;\partial \Omega\times(0,T),\\
&y(0)=y_0\;\;\;\;\text{in}\;\;\Omega,
\end{split}\right.
\end{equation*}
where $a(\cdot)\in L^\infty(\Omega)$ and $y_0\in  L^2(\Omega)$. 
We apply Theorem~\ref{periodictarget} with $X=U=V=L^2(\Omega)$, $A=\triangle-a(\cdot)I$ defined on the domain $D(A)=H^2(\Omega)\cap H_0^1(\Omega)$, $B=\chi_{\omega_2}I$, and $C=\chi_{\omega_1}I$.
Here, $I$ is the identity operator on $L^2(\Omega)$. Since the above heat equation with distributed control localized in $\omega_i$, $i=1,2$, is null controllable in any finite time (see for instance \cite{Zcon}), the pairs $(A,B)$ and $(A^*,C^*)$ are exponentially stabilizable. Then, according to Theorem~\ref{periodictarget}, this optimal control problem has the periodic  exponential turnpike property.
\end{example}

\begin{example}
Let $\Omega\subset\mathbb{R}^n$ ($n\geq1$) be an open and bounded domain 
with a $C^2$ boundary, and let $\omega_i\subset\Omega$, $i=1,2$, be nonempty open subsets. 
Let $z_d\in C([0,+\infty);L^2(\Omega))$ be a periodic tracking trajectory, satisfying $z_d(t,\cdot)=z_d(t+1,\cdot)$ for any $t\geq 0$.
Given any $T>0$, we consider the optimal control problem
\begin{equation*}\text{Minimize}\;\;\; \frac{1}{2}\int_0^T\int_{\omega_1}
|z_t(x,t)-z_d(x,t)|^2\,dx\,dt+\frac{1}{2}\int_0^T\int_{\omega_2}|u(x,t)|^2\,dx\,dt,
\end{equation*}
over all possible $(z,u)\in C([0,T];H_0^1(\Omega))\cap C^1([0,T]; L^2(\Omega))\times L^2(0,T;L^2(\Omega))$ satisfying (in the mild sense)
\begin{equation*}\left\{
\begin{split}
&z_{tt}-\triangle z=\chi_{\omega_2}u\;\;\;\;\text{in}\;\;\Omega\times(0,T),\\
&z=0\;\;\;\;\text{on}\;\;\partial \Omega\times(0,T),\\
&z(0)=z_0,\;\;z_t(0)=z_1\;\;\;\;\text{in}\;\;\Omega,
\end{split}\right.
\end{equation*}
where  $z_0\in H_0^1(\Omega)$ and $z_1\in L^2(\Omega)$. Writing the wave equation as a first-order system, we apply Theorem~\ref{periodictarget} with $X=H_0^1(\Omega)\times L^2(\Omega)$, $U=V=L^2(\Omega)$,
\begin{equation*}
A=
\begin{pmatrix}
0 &I\\
\triangle & 0\\
\end{pmatrix}\;\;\; \text{defined on the domain}\;\;D(A)=\big(H^2(\Omega)\cap H_0^1(\Omega)\big)\times H_0^1(\Omega),
\end{equation*}
\begin{equation*}
B=\begin{pmatrix}
0 \\
\chi_{\omega_2}I\\
\end{pmatrix},\;\;\; 
C=\begin{pmatrix}
0 &
\chi_{\omega_1}I\\
\end{pmatrix},
\end{equation*}
where $I$ the identity operator on $L^2(\Omega)$. Note that $A^*=-A$.
It is well known (see \cite{BardosLebeauRauch}) that, under the assumption that 
$(\Omega,\omega_i)$, $i=1,2$, satisfies the so-called \textit{Geometric Control Condition}\footnote{The Geometric Control Condition stipulates that every ray of geometrical optics that propagates in $\Omega$ and  reflects on its boundary should intersect $\omega_i$ within time  $T$.}, there exists $T_0>0$ such that the wave equation with the distributed control localized in $\omega_i$, $i=1,2$, is null controllable in any time $T>T_0$. Therefore,
the pairs $(A,B)$ and $(A^*,C^*)$ are exponentially stabilizable. Then, according to Theorem~\ref{periodictarget}, this optimal control problem has the periodic exponential turnpike property if $T>T_0$.
\end{example}

\subsection{Particular case: tracking a point}
Consider the linear quadratic optimal control problem $(LQ^T)$ of Section \ref{sec_periodic}.
If $y_d(t)=y_d\in X$ and $u_d(t)=u_d\in U$ do not depend on $t$, then Theorem~\ref{731-10} (Section~\ref{sub2}) can be applied and the referred turnpike is an optimal solution of the static optimal control problem 
\begin{equation*}
(P_s)\qquad\qquad\inf J_s(y,u)=
\frac{1}{2}\left(\|C(y-y_d)\|_V^2 + \Vert Q^{1/2}(u-u_d)\Vert_U\right),
\end{equation*}
over the set of all $(y,u)\in X\times U$  satisfying the constraint $Ay+Bu=0$.
Since $(P_s)$ is a convex programming problem, it is well known that 
 $(y_s,u_s)\in X\times U$ is an optimal solution of $(P_s)$ if and only if there exists an adjoint state $\lambda_s\in X$ (see, e.g., \cite{ItoKunisch,Lions} or \cite[Chapter 6]{T}) such that $u_s = u_d + Q^{-1}B^*\lambda_s$ and
\begin{equation}\label{6142}
\left\{\begin{split}
&Ay_s + BQ^{-1}B^*\lambda_s + Bu_d = 0,\\
&C^*C y_s - A^*\lambda_s -C^*Cy_d = 0.
\end{split}\right.
\end{equation}
Besides, it follows from \cite{TZZ1} that the optimal solution of the periodic optimal problem $(LQ^\Pi)$ coincides with that of the corresponding steady-state optimal control problem $(P_s)$ (i.e., $(y^\Pi(\cdot),u^\Pi(\cdot))\equiv (y_s,u_s)$). More precisely, we have the following result.

\begin{theorem}\label{thm1LQ}
Assume that $(A,B)$ is exponentially stabilizable and $(A,C)$
is exponentially detectable. If $y_d(\cdot)\equiv y_d\in X$ and $u_d(\cdot)\equiv u_d\in U$, then there exist positive constants $c$ and $\nu$ such that, for any $T>0$,
\begin{multline}\label{6175}
\left\Vert y^T(t)- y_s\right\Vert_X+\left\Vert \lambda^T(t)-\lambda_s\right\Vert_X+\left\Vert u^T(t)-u_s\right\Vert_U \\
\leq c \left( \|y_0-y_s\|_X+\|\lambda_s\|_X \right) \left( e^{-\nu t}+e^{-\nu (T-t)} \right) .
\end{multline}
\end{theorem}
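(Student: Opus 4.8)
The plan is to obtain Theorem~\ref{thm1LQ} as a specialization of Theorem~\ref{periodictarget}, but with the extra work of making explicit the linear dependence on the initial data that the qualitative estimate \eqref{10271} conceals inside its constant $c$.

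First I would observe that when $y_d(\cdot)\equiv y_d$ and $u_d(\cdot)\equiv u_d$ are constant, they are $\Pi$-periodic for every $\Pi>0$, so Theorem~\ref{periodictarget} applies and produces a unique periodic extremal triple $(y^\Pi(\cdot),u^\Pi(\cdot),\lambda^\Pi(\cdot))$. The next step is to identify this turnpike with the steady state. The constant triple $(y_s,u_s,\lambda_s)$, whose components satisfy \eqref{6142} together with $u_s=u_d+Q^{-1}B^*\lambda_s$, has vanishing time derivative and therefore solves the periodic optimality system \eqref{xiaoextremalsystLQ}--\eqref{xiao61718}, the periodicity boundary conditions being automatic for a constant. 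By the uniqueness asserted in Theorem~\ref{periodictarget} (consistently with \cite{TZZ1}), we conclude $y^\Pi(\cdot)\equiv y_s$, $\lambda^\Pi(\cdot)\equiv\lambda_s$ and $u^\Pi(\cdot)\equiv u_s$. Substituting into \eqref{10271} already yields the exponential closeness of the finite-horizon triple to $(y_s,u_s,\lambda_s)$ with some constant $c$.

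It remains to upgrade that constant to the explicit factor $c(\|y_0-y_s\|_X+\|\lambda_s\|_X)$, and this is where the linear-quadratic structure is decisive. Setting $\delta y=y^T-y_s$ and $\delta\lambda=\lambda^T-\lambda_s$ and subtracting \eqref{6142} from \eqref{j152}, one checks that $(\delta y,\delta\lambda)$ solves the \emph{homogeneous} Hamiltonian system with boundary data $\delta y(0)=y_0-y_s$ and $\delta\lambda(T)=-\lambda_s$. Applying the dichotomy transformation built in Section~\ref{1251} from the Riccati operator $P$ of Remark~\ref{explicit}, one sets $w=\delta\lambda+P\,\delta y$; a direct computation using the Riccati equation shows that $\delta y$ then obeys a forward-stable equation and $w$ a backward-stable equation,
\begin{equation*}
\dot{\delta y}=(A-BQ^{-1}B^*P)\,\delta y+BQ^{-1}B^*w,\qquad \dot w=-(A-BQ^{-1}B^*P)^*w,
\end{equation*}
both governed by the exponentially stable semigroup $(S(t))_{t\geq0}$ of Remark~\ref{explicit}, with boundary data $\delta y(0)=y_0-y_s$ and $w(T)=-\lambda_s+P\,\delta y(T)$. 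Resolving the coupling by the usual fixed-point/matching argument, which converges uniformly in $T$ thanks to the exponential gap, gives $\|\delta y(t)\|_X+\|w(t)\|_X\leq c(\|y_0-y_s\|_X+\|\lambda_s\|_X)(e^{-\nu t}+e^{-\nu(T-t)})$ with $c,\nu$ independent of $T$. Then $\delta\lambda=w-P\,\delta y$ and $\delta u=Q^{-1}B^*\delta\lambda$ inherit the same bound (using $B^*\in L(X,U)$ and boundedness of $Q^{-1}$), which is exactly \eqref{6175}.

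The main obstacle is precisely this quantitative refinement: the qualitative bound \eqref{10271} absorbs the data into $c$, so one must re-enter the dichotomy argument and verify that the decoupled stable and anti-stable trajectories depend linearly and boundedly, uniformly in $T$, on the two pieces of boundary data $(y_0-y_s,-\lambda_s)$. Once it is recognized that the optimality system for the differences is \emph{exactly linear and homogeneous} with this data, the linearity is transparent and the uniform-in-$T$ estimates are inherited directly from the exponential stability of $S(t)$ established in the proof of Theorem~\ref{periodictarget}; no smallness condition is needed here, in contrast with the nonlinear Theorem~\ref{731-10}.
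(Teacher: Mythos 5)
Your reduction to the homogeneous Hamiltonian system for $(\delta y,\delta\lambda)$ and the partial transformation $w=\delta\lambda+P\,\delta y$ are correct (this is the paper's transformation $\mathcal T_1$, producing the upper-triangular system \eqref{727-2}), and your identification of the periodic turnpike with the steady state is valid but, as you acknowledge, yields only the qualitative bound. The genuine gap is the sentence ``resolving the coupling by the usual fixed-point/matching argument, which converges uniformly in $T$ thanks to the exponential gap.'' Write the matching out: since $w(t)=S(T-t)^*w(T)$ with $w(T)=-\lambda_s+P\,\delta y(T)$, Duhamel's formula evaluated at $t=T$ gives
\begin{equation*}
\bigl(I+E_TP\bigr)\,\delta y(T)=S(T)(y_0-y_s)+E_T\lambda_s,
\qquad E_T:=-\int_0^T S(\tau)BQ^{-1}B^*S(\tau)^*\,d\tau,
\end{equation*}
so your whole quantitative argument hinges on $I+E_TP$ being invertible with inverse bounded uniformly in $T$. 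Exponential stability of $S(\cdot)$ does not deliver this: there is no smallness parameter making $E_TP$ a contraction (its norm need not be below $1$), so a fixed-point iteration for the unknown $\delta y(T)$ has no reason to converge, and ``the exponential gap'' is not a mechanism that produces it. Indeed, the nonzero spectrum of $-E_TP$ coincides with that of the nonnegative self-adjoint operator $P^{1/2}(-E_T)P^{1/2}$, and excluding the value $1$ from this spectrum --- which is exactly the solvability of the matching equation --- requires invoking the detectability of $(A,C)$ once more; your matching step never uses that hypothesis, which is the telltale sign that the analytic content of the theorem has been asserted rather than proved.

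This missing ingredient is precisely what the paper supplies as Lemma~\ref{upperbound}: the $T$-uniform a priori estimate $\Vert\delta y(T)\Vert_X+\Vert\delta\lambda(0)\Vert_X\leq c\,\bigl(\Vert\delta y(0)\Vert_X+\Vert\delta\lambda(T)\Vert_X\bigr)$ for solutions of the homogeneous system \eqref{6181}, proved by testing against auxiliary exponentially stable adjoint systems (this is where exponential stabilizability \emph{and} detectability both enter) and closing with an energy identity. With that lemma, the paper bounds the boundary values $z(0)=(I+EP)\delta y(0)+E\,\delta\lambda(0)$ and $q(T)=P\,\delta y(T)+\delta\lambda(T)$ of the fully decoupled variables --- it uses the second transformation built from the Lyapunov operator $E$ of \eqref{731-2}, so that no matching equation ever has to be solved --- and \eqref{6175} follows from $z(t)=S(t)z(0)$, $q(t)=S(T-t)^*q(T)$ and the bounded invertibility of the block transformation. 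Your triangular-form variant can be repaired the same way: Lemma~\ref{upperbound} bounds $\Vert\delta y(T)\Vert_X$ linearly in $\Vert y_0-y_s\Vert_X+\Vert\lambda_s\Vert_X$, hence bounds $\Vert w(T)\Vert_X$, and then your two Duhamel estimates close the proof with no fixed point at all. As written, however, the proposal skips the one step where the hypotheses of the theorem actually do their work.
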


This result means that, in the linear quadratic framework, Theorem~\ref{731-10} holds true \emph{globally}. It unifies and extends
the exponential turnpike theorems established in \cite{PZ1,TZ1}.

\subsection{A numerical simulation}
In this section, we provide a simple example in order to numerically illustrate the periodic turnpike phenomenon in finite-dimensional case. 
Given any $T>0$, we consider the optimal control problem of minimizing the cost functional
\begin{equation*}
\frac{1}{2}\int_0^T \big((x(t)-\cos (2\pi t))^2+(y(t)-\sin (2\pi t))^2+u(t)^2\big)\,dt.
\end{equation*}
for the two-dimensional control system
\begin{equation*}
\dot x(t)=y(t),\qquad \dot y(t)=u(t), \;\;\;\;t\in(0,T),
\end{equation*}
with fixed initial condition $(x(0),y(0))=(0.1,0)$. 
Here, the target trajectories are $1$-periodic.
The explicit formulas for the expected periodic turnpike are given in Remark~\ref{explicit} and have been numerically computed with \texttt{Matlab}.
More precisely, to fit in the framework that has been developed previously, we set
$$
A = \begin{pmatrix} 0 & 1 \\ 0 & 0 \end{pmatrix},\qquad
B = \begin{pmatrix} 0 \\ 1 \end{pmatrix},\qquad C=Q=I,\qquad 
u_d\equiv 0,\qquad y_d(t) = \begin{pmatrix} \cos (2\pi t) \\ \sin (2\pi t)\end{pmatrix},
$$
and then, using \texttt{Matlab}:
\begin{itemize}
\item We solve the Riccati equation $A^*P+PA-PBB^*P+I=0$.
\item We solve the Lyapunov equation $(A-BB^*P)E+E(A-BB^*P)^*-BB^*=0$.
\item We set $\mathcal{T} = \begin{pmatrix} I & -E \\ -P & I+PE \end{pmatrix}$ and $S(t)=\exp(t(A-BB^*P))$.
\item We compute
$$
z(t) = -S(t)(I-S(1))^{-1}\int_0^1 S(1-\tau)Ey_d(\tau)\,d\tau - \int_0^t S(t-\tau)Ey_d(\tau)\,d\tau
$$
and
$$
q(t) = S(1-t)^*(I-S(1)^*)^{-1}\int_0^1 S(1-\tau)^*y_d(1-\tau)\, d\tau + \int_0^{1-t} S(1-t-\tau)^*y_d(1-\tau)\, d\tau,
$$
for $t\in[0,1]$. This can be done by noting that $z(\cdot)$ and $q(\cdot)$ are solutions of some ordinary differential equations and by using numerical integration.
\item Then, the reference turnpike trajectory and adjoint state are given by
$$
\begin{pmatrix}\big(\bar x(t), \bar y(t)\big)\\\big( \bar \lambda_x(t),\bar \lambda_y(t)\big)\end{pmatrix} = \mathcal{T} \begin{pmatrix} z(t)\\ q(t)\end{pmatrix},\qquad t\in[0,1].
$$
\end{itemize}
The optimal extremal $(x(\cdot),y(\cdot),\lambda_x(\cdot),\lambda_y(\cdot),u(\cdot))$, solution of the first-order optimality system derived from the Pontryagin maximum principle, has been computed in time $T = 20$ by using a direct method of numerical optimal control (see \cite{Trelatbook}), more precisely, we have discretized the above optimal control problem using a Crank-Nicolson method and we have then used the automatic differentiation code \texttt{AMPL} (see \cite{AMPL}) combined with the optimization routine \texttt{IpOpt} (see \cite{IPOPT}) on a standard desktop machine.

\begin{figure}[h]
\centering 
\includegraphics[width=15cm]{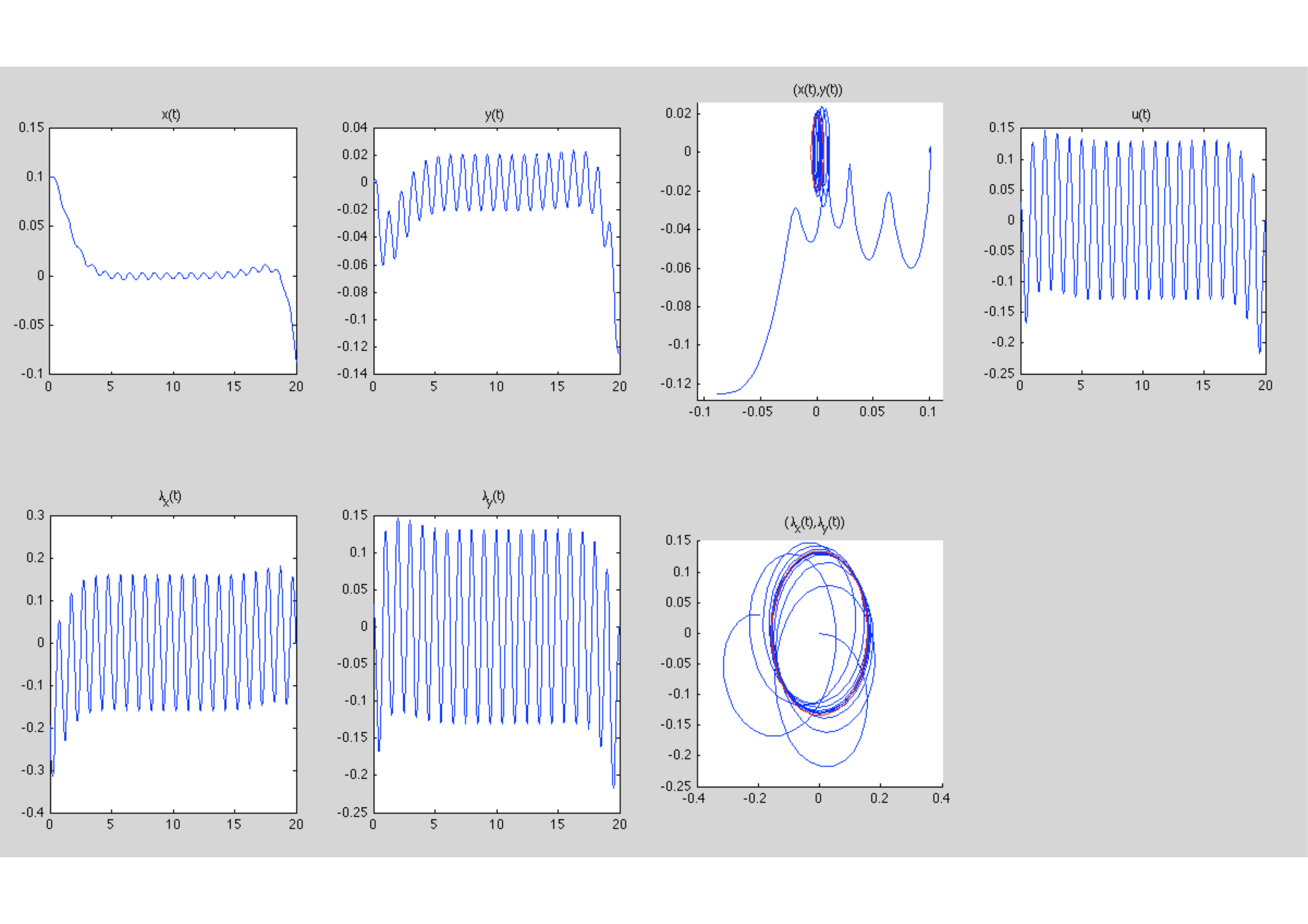}
\caption{Example of a periodic turnpike} \label{fig1}
\end{figure}

The turnpike property can be observed on Figure \ref{fig1}. 
As expected, except transient initial and final arcs, the extremal  $(x(\cdot), y(\cdot),\lambda_x(\cdot),\lambda_y(\cdot))$ (in blue) remains close to the periodic turnpike
$(\bar x(\cdot), \bar y(\cdot),\bar \lambda_x(\cdot),\bar \lambda_y(\cdot))$ (in red).

\subsection{Conclusion and further comments}\label{con}
We have established the exponential turnpike property around an optimal steady-state for general nonlinear infinite-dimensional optimal control problems under certain stability and smallness assumptions. We have then established the periodic exponential turnpike property 
for linear quadratic optimal control problems with periodic tracking trajectories, for which the turnpike is an optimal solution to a periodic optimal control problem. To the best of our knowledge, the latter result is new even in finite dimension.
Moreover, the optimal periodic solution has been explicitly characterized by means of a dichotomy transformation on the solutions of the operator algebraic Riccati and Lyapunov equations.
 
Some possible perspectives are in order.

\begin{enumerate}

\item We believe that the periodic turnpike phenomenon appears in many concrete situations, and is for instance reminiscent of many biological processes (see, e.g., \cite{GAB}). Then, obtaining a nonlinear version of our periodic turnpike theorem would certainly model many possible problems in life sciences.
When linearizing the extremal system (derived from the Pontryagin maximum principle) around the optimal periodic trajectory, we obtain a linear time-periodic Hamiltonian system instead of an autonomous one. Identifying adequate hyperbolic properties is then an open problem. In a forthcoming paper, we will investigate the periodic turnpike phenomenon for optimal controls of general nonlinear time-periodic systems.

\item Our analysis was restricted to bounded control and observation operators. For instance, we were able to treat heat and wave equations with \emph{internal} control, but not with a \emph{boundary} control because then the control operator is unbounded. However, it is expected that similar exponential turnpike properties may also be established in a general context for infinite-dimensional optimal control systems with unbounded input and output operators. We refer to \cite{GTZ} for a result in this direction, for a one-dimensional wave equation with Neumann boundary control.

\item
Although the results established in this paper are only in the framework of Hilbert spaces, 
the methodology used here is applicable to the more general reflexive Banach spaces, in which the Pontryagin maximum principle for optimal 
 control problems  
 is  valid. 
Using the solvability of algebraic Riccati equation in Banach spaces (see e.g. \cite{K1}) instead of that in Hilbert spaces,
a similar procedure can be carried out with some slight modifications as what we have done in our analysis for the case of Hilbert spaces.

\item
We have emphasized in the introduction that the final state $y(T)$ in the underlying optimal control problems is assumed to be free.
This raises an obvious question: what happens if the final state is fixed, for instance, $y(T)=y_1$? In finite dimension, the exponential turnpike theorem 
for this question has been established in \cite{TZ1} under suitable Kalman rank condition.  In infinite dimension, however,  from our analysis there is a twofold difficulty for this question.
On one hand, the Pontryagin maximum principle may fail in general. 
On the other hand,
even if the Pontryagin maximum principle is valid under certain finite codimensional  condition on the final state (cf.  \cite[Chapter 4, Theorem 1.6]{LiXunjing}),
however we still do not know how to prove the invertibility of the Lyapunov    operator $E$
constructed in \eqref{731-2}.
Indeed, it 
is generally not invertible. For example, when $$
A=\begin{pmatrix}
-1& 0\\0 &-1
\end{pmatrix},\;\;
B=\begin{pmatrix}
1\\0
\end{pmatrix},\;\;
C=0,\;\;Q=\begin{pmatrix}
1&0\\
0&1
\end{pmatrix},
$$
one can easily check that $E$ is not invertible.

\item We have established our main results for optimal control problems not involving any state or control constraint. Indeed, the presence of constraints complicates the dynamics derived from the Pontryagin maximum principle, and then identifying hyperbolicity features may be very challenging. For instance, control constraints may promote \textit{chattering}, i.e., an infinite number of control switchings over a compact time interval. Note that a turnpike phenomenon is suspected in \cite{Zhu}, while chattering also occurs.

Consider for instance the linear quadratic optimal control problem $(LQ^T)$ considered in Section \ref{sec_periodic}. If we impose the simple control constraint $u\geq 0$, then the application of the Pontryagin maximum principle leads to the same extremal equations \eqref{j152}, and to the extremal control
$$
u(t) = \max\left( 0, u_d(t)+Q^{-1}B^*\lambda^T(t) \right).
$$
Since the extremal control is not smooth anymore, it is not clear how to analyze and use hyperbolicity properties of the extremal system, by linearization around the equilibrium corresponding to the optimal steady-state. Oscillations may indeed occur (with possible chattering as mentioned above) and make the dynamical study complex.

It is therefore challenging to consider the exponential turnpike property for 
optimal control problems with mixed state and/or control constraints, particularly including the time and norm optimal control problems for heat equations, at least, by using the approach developed in this paper, consisting of linearizing the extremal equations coming from the application of the Pontryagin maximum principle.

Instead, an interesting alternative consists of using dissipativity properties of the control system. We refer to \cite{Grune1} for investigating the exponential turnpike property for a class of strictly dissipative discrete-time systems in finite dimension. Even in the presence of constraints, the turnpike property may be analyzed within the viewpoint of strict dissipativity (see \cite{Grune1,Faulwasser1,G3,TZZ1}). Such an analysis may reveal some relationships between strictly dissipativity and hyperbolicity, which are now two methodologies used in the literature to study the  exponential turnpike property.
In \cite{TZZ1}, we provide a comparison between these two approaches which yield results of different natures.

\item Our Theorem \ref{731-10} has been proved under the smallness assumption \eqref{727-88}, because our approach consists of linearizing the optimality system near the optimal steady-state that is also an equilibrium point of the Pontryagin equations. Hence, this linearized system does not reflect what may happen far from this equilibrium point, and this is why the smallness condition \eqref{727-88} is then required in the proof.
Establishing a global result, without such smallness assumptions, may certainly be done, but at the price of having a good knowledge of the \emph{global} dynamics.
Note that, in \cite{Rapaport,Rapaport2}, the authors study in a specific context the optimality status of several turnpikes that are in competition. 
Besides, dissipativity properties of the global dynamics (see \cite{Grune1,Faulwasser1,G3,TZZ1}) may also be a route to deriving global turnpike properties.

\item Turnpike issues can be explored as well for shape optimization problems, by raising the question of whether optimal designs  of a shape optimization problem for evolution systems approximate to those of an optimal steady-state one, as the time horizon is large enough.  We refer to \cite{AMP} for an example of such a large time behavior for the two-phase optimal design for the heat equation, by using relaxation and homogenization. 
We notice that whether the admissible shapes  depend on time or not makes a huge difference in the framework of shape optimization problems for evolution systems. 
\end{enumerate}


\section{Proofs}
For optimal control problems governed by infinite-dimensional evolution systems in which the Pontryagin maximum principle can be applied, we obtain a Hamiltonian system (two-point boundary value problem) coupling the optimal state and the associated adjoint state. In the proofs, we develop a dichotomy transformation acting on the solutions of the operator algebraic Riccati and Lyapunov type equations, in order to ``decouple" the Hamiltonian system to a block-diagonal one, containing a contracting part (which is stable) and an expanding part (which is unstable).  As a byproduct, we obtain a quantitative description of the limiting behavior, interpreted as the exponential turnpike property, of the optimal solutions of the original optimal control problem, as the time horizon is large enough.  

Since the proofs of Theorems~\ref{periodictarget} and \ref{thm1LQ} are much easier than that of Theorem~\ref{731-10}, for the reader's convenience, we first give their proofs in Section~\ref{1252} and then we provide the proof of Theorem~\ref{731-10} in Section~\ref{1251}.

\subsection{Proofs of Theorems~\ref{periodictarget} and \ref{thm1LQ}}\label{1252}
First of all, note that the extremal solution $(y_s,\lambda_s)$ of the problem $(P_s)$, solution of \eqref{6142}, is an equilibrium point for the Hamiltonian system \eqref{j152} with $y_d(\cdot)\equiv y_d$ and $u_d(\cdot)\equiv u_d$.
We are going to prove that this equilibrium is a saddle point for the system \eqref{j152}, thus yielding the turnpike property.
Setting
\begin{equation}\label{add1}
\delta y(t) = y^T(t)-y_s,\quad \delta\lambda(t)=\lambda^T(t)-\lambda_s,\;\;\;t\in[0,T],
\end{equation}
we get from \eqref{j152} and \eqref{6142} that
\begin{equation}\label{6143}
\frac{d}{dt}
\begin{pmatrix}
 \delta  y(t)\\ \delta \lambda(t)
\end{pmatrix}
=M
\begin{pmatrix}
\delta y(t)\\ \delta \lambda(t)
\end{pmatrix},
\;\;t\in[0,T],
\end{equation}
where $M:D(A)\times D(A^*)\rightarrow X\times X$ is the linear unbounded operator block defined by
\begin{equation}\label{eqxlambda}
M=
\begin{pmatrix}
A & BQ^{-1}B^* \\ C^*C & -A^*
\end{pmatrix}.
\end{equation}

\begin{lemma}\label{dichotomy}
Assume that $(A,B)$ is exponentially stabilizable and that $(A,C)$ is exponentially detectable. Then $M$ is block-diagonalizable and boundedly invertible.
Moreover, the system \eqref{6143} can be decoupled by a bounded linear transformation.
\end{lemma}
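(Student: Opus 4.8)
The plan is to reduce the coupled Hamiltonian operator $M$ to block-diagonal form by a bounded change of variables built from the solutions of the operator algebraic Riccati and Lyapunov equations; invertibility of $M$ and the decoupling of \eqref{6143} will then both be read off from this diagonalization. The starting point is the classical solvability theory for the Riccati equation (as in the references quoted in Remark~\ref{explicit}): under the exponential stabilizability of $(A,B)$ and the exponential detectability of $(A,C)$, there is a unique nonnegative self-adjoint $P\in L(X,X)$ solving $A^*P+PA-PBQ^{-1}B^*P+C^*C=0$, such that the operator $A_P:=A-BQ^{-1}B^*P$ generates an exponentially stable $C_0$ semigroup $(S(t))_{t\geq0}$. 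This is the single nontrivial input I would borrow; everything else is essentially algebra.

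Given $P$ and $S(t)$, the exponential stability makes the integral $E:=-\int_0^{+\infty}S(t)BQ^{-1}B^*S(t)^*\,dt$ converge in $L(X,X)$, and differentiating $t\mapsto S(t)BQ^{-1}B^*S(t)^*$ and integrating over $[0,+\infty)$ shows that $E$ is the unique solution of the Lyapunov equation $A_PE+EA_P^*=BQ^{-1}B^*$. I would then introduce the dichotomy transformation $\mathcal{T}=\begin{pmatrix} I & -E \\ -P & I+PE \end{pmatrix}$, already used in the explicit formulas of Remark~\ref{explicit}. A direct multiplication verifies $\mathcal{T}^{-1}=\begin{pmatrix} I+EP & E \\ P & I \end{pmatrix}$, and since $P,E\in L(X,X)$ both operators are bounded, so $\mathcal{T}$ is boundedly invertible on $X\times X$.

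The core of the argument is the identity
\[
\mathcal{T}^{-1}M\mathcal{T}=\begin{pmatrix} A_P & 0 \\ 0 & -A_P^* \end{pmatrix},
\]
which I would establish block by block on $D(A)\times D(A^*)$. The upper-left block collapses to $A_P$ directly; the lower-left block vanishes and the lower-right block becomes $-A_P^*$ after substituting the Riccati equation; and the upper-right block, after the first-order terms are cancelled using the Lyapunov equation for $E$, reduces to $-E\big(PA+A^*P+C^*C-PBQ^{-1}B^*P\big)E$, which is again zero by the Riccati equation. Bounded invertibility of $M$ then follows because exponential stability puts $0$ in the resolvent set of $A_P$, with $A_P^{-1}=-\int_0^{+\infty}S(t)\,dt\in L(X,X)$, and likewise for $A_P^*$; hence the block-diagonal operator, and therefore $M=\mathcal{T}\,\mathrm{diag}(A_P,-A_P^*)\,\mathcal{T}^{-1}$, is boundedly invertible. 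Finally, setting $(z,q)=\mathcal{T}^{-1}(\delta y,\delta\lambda)$ transforms \eqref{6143} into the decoupled pair $\dot z=A_Pz$ and $\dot q=-A_P^*q$, which is precisely the claimed decoupling by a bounded linear transformation.

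The main difficulty I anticipate is not the algebra but the functional-analytic bookkeeping forced by the unboundedness of $A$ and $A^*$: the block computations must be justified on the correct domains, and, more conceptually, $-A_P^*$ does \emph{not} generate a forward $C_0$ semigroup. The ``expanding'' block must therefore be interpreted through the backward-in-time evolution governed by the exponentially stable semigroup $S(\cdot)^*$, and it is exactly this stable/unstable splitting that realizes the hyperbolic dichotomy underlying the turnpike property.
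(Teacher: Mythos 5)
Your proposal is correct and takes essentially the same approach as the paper: the same inputs (the Riccati solution $P$ from Zabczyk's theorem, the Lyapunov integral operator $E$ built from the exponentially stable semigroup $S(\cdot)$), and the same transformation, since your $\mathcal{T}^{-1}=\begin{pmatrix} I+EP & E\\ P & I\end{pmatrix}$ is exactly the paper's composed map $\mathcal{T}_3\circ\mathcal{T}_1$. The only cosmetic difference is that the paper diagonalizes in two successive triangular steps (first conjugating by $\mathcal{T}_1$ using the Riccati equation to reach block-upper-triangular form, then by $\mathcal{T}_3$ using the Lyapunov equation to remove the remaining off-diagonal block), whereas you verify the composed conjugation identity in a single block computation.
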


\begin{proof}
 It is well known from \cite[Theorem 4.4]{zabczyk} that, under these assumptions, 
the operator algebraic Riccati equation 
\begin{equation}\label{riccati}
A^*P+PA-PBQ^{-1}B^*P+C^*C=0,
\end{equation}
has a unique nonnegative definite self-adjoint operator solution $P\in L(X,X)$. Moreover, the 
operator $A-BQ^{-1}B^*P$ generates an exponentially stable $C_0$ semigroup $(S(t))_{t\geq 0}$, satisfying
\begin{equation}\label{est}
\|S(t)\|_{L(X,X)}\leq ce^{-\nu t}, \;\;t\geq0,
\end{equation}
for some constants $c>0$ and $\nu>0$. As a consequence (see \cite[Theorem 3.1]{zabczyk} for instance), the spectral abscissa of $A-BQ^{-1}B^*P$ satisfies
\begin{equation*}\label{spectral}
\sup\big\{\text{Re}\,\lambda\ \mid\ \lambda\in \sigma (A-BQ^{-1}B^*P)\big\}\leq -\nu<0,
\end{equation*}
and thus the operator $A-BQ^{-1}B^*P$  is boundedly invertible.
Since the semigroup $(S(t))_{t\geq0}$ is  exponentially stable, the Lyapunov integral operator $ E\in L(X,X)$ given by
\begin{equation}\label{731-2}
 E= -\int_0^{+\infty} S(t)BQ^{-1}B^*S(t)^*\,dt
\end{equation}
is well defined, and is the solution of the Lyapunov operator equation (see also \cite{zabczyk})
\begin{equation}\label{727-3}
(A-BQ^{-1}B^*P) E+ E(A-BQ^{-1}B^*P)^*-BQ^{-1}B^*=0,
\end{equation}
or equivalently,
$$
2\langle  (A-BQ^{-1}B^*P)Ex,x\rangle_X-\langle BQ^{-1}B^*x,x\rangle_X =0,\;\;\forall x\in D(A).
$$
We now construct a dichotomy transformation in order to decouple the system \eqref{6143}, based on the linear and bounded  operators $P$ and $E$. 
We first define two linear transformations on $X\times X$ by
\begin{equation*}
\mathcal T_1= 
\begin{pmatrix}
I & 0\\ P & I
\end{pmatrix}
\;\;\;\;\text{and}\;\;\;\;
\mathcal T_2= 
\begin{pmatrix}
I & 0\\ -P & I
\end{pmatrix},
\end{equation*}
where $I$ is the identity operator on $X$. Note that
\begin{equation*}\label{6144}
\mathcal T_1\circ\mathcal T_2=
\begin{pmatrix}
I&0\\0& I
\end{pmatrix}
=\mathcal T_2 \circ \mathcal T_1.
\end{equation*}
Since  $P$ solves the Riccati equation \eqref{riccati}, a straightforward computation shows that
\begin{equation}\label{6145}
\mathcal T_1\circ M \circ \mathcal T_2=
\begin{pmatrix}
A-BQ^{-1}B^*P & BQ^{-1}B^*\\
0& -(A-BQ^{-1}B^*P)^*
\end{pmatrix}.
\end{equation}
Setting
\begin{equation*}\label{6178}
\begin{pmatrix}
v(t)\\w(t)
\end{pmatrix}
=\mathcal T_1
\begin{pmatrix}
\delta y(t)\\ \delta \lambda(t)
\end{pmatrix},\;\;t\in[0,T],
\end{equation*}
we infer from  \eqref{6143} and \eqref{6145} that
\begin{equation}\label{727-2}
\frac{d}{dt}
\begin{pmatrix}
v(t)\\ w(t)
\end{pmatrix}
=\begin{pmatrix}
A-BQ^{-1}B^*P & BQ^{-1}B^*\\
0& -(A-BQ^{-1}B^*P)^*
\end{pmatrix}
\begin{pmatrix}
v(t)\\  w(t)
\end{pmatrix},\;\;t\in[0,T].
\end{equation}
Now, we set
\begin{equation*}
\mathcal T_3=
\begin{pmatrix}
I & E\\ 0 & I
\end{pmatrix}
\;\;\;\;\text{and}\;\;\;\;
\mathcal T_4=
\begin{pmatrix}
I & -E\\
0 & I
\end{pmatrix}.
\end{equation*}
Note that
$$\mathcal T_3\circ\mathcal T_4=\begin{pmatrix}
I&0\\0& I
\end{pmatrix}=\mathcal T_4\circ\mathcal T_3.$$
By performing the transformation 
\begin{equation*}\label{727-4}
\begin{pmatrix}
z(t) \\ q(t)
\end{pmatrix}
=\mathcal T_3
\begin{pmatrix}
v(t) \\ w(t)
\end{pmatrix},\;\;t\in[0,T],
\end{equation*}
we infer from \eqref{727-2} and \eqref{727-3} that
\begin{equation}\label{dec}
\frac{d}{dt}
\begin{pmatrix}
z(t) \\ q(t)
\end{pmatrix}
=
\begin{pmatrix}
A-BQ^{-1}B^*P &0 \\  
0 & -(A-BQ^{-1}B^*P)^* 
\end{pmatrix}
\begin{pmatrix}
z(t) \\ q(t)
\end{pmatrix},\;\; t\in[0,T].
\end{equation}
 Finally, we see that $M$ can be block-diagonalized by using the composition transformation $\mathcal T=\mathcal T_3 \circ\mathcal T_1$ given by
\begin{equation*}
\mathcal T=
\begin{pmatrix}
I+ E P & E\\
P      &  I
\end{pmatrix}.
\end{equation*}
The lemma is proved.
\end{proof}

\begin{remark}
Lemma~\ref{dichotomy} implies that the optimality system \eqref{6142} has a unique solution.
\end{remark}


\begin{remark}
In the finite-dimensional case, a dichotomy transformation similar to the one designed above has been used in \cite[Lemma 2.5]{Lukes} and \cite{Sak}. Here, we adapt this dichotomy transformation in the setting of infinite-dimensional Hilbert spaces. We also refer the reader to \cite{WK} for a different dichotomy transformation,  which is, however, based on positive and negative definite solutions of the matrix algebraic Riccati equation. 
\end{remark}

\begin{remark}
The applications of dichotomy transformations for the Hamiltonian system, resulting from the Pontryagin maximum principle, are also well known in the numerical analysis of optimal control problems. For this issue we refer the reader to the brief paper \cite{R}. Note that this uncoupling dichotomy transformation technique could be used as well for the numerical analysis of optimal control problems for partial differential equations.
\end{remark}

The following stability estimate is inspired from \cite[Lemma 3.5]{PZ1}.

\begin{lemma}\label{upperbound}
Assume that $(A,B)$ is exponentially stabilizable and that $(A,C)$ is exponentially detectable. Then, there exists a constant $c>0$ independent of $T$
such that
\begin{equation}\label{7271}
\|y(T)\|_X+\|\lambda(0)\|_X\leq c\big (\|y(0)\|_X+\|\lambda(T)\|_X\big),
\end{equation}
 for any solution $(y(\cdot),\lambda(\cdot))\in C([0,T];X)\times C([0,T];X)$ of the coupled system
\begin{equation}\label{6181}
\left\{
\begin{split}
\dot y(t) &= A y(t)+BQ^{-1}B^*\lambda(t),\\
\dot\lambda(t)&= C^*Cy(t)-A^*\lambda(t),\;\;\;t\in[0,T].
\end{split}
\right.
\end{equation}
\end{lemma}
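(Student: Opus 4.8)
The plan is to run the solutions of \eqref{6181} through the dichotomy transformation already constructed in Lemma~\ref{dichotomy}. System \eqref{6181} has exactly the form \eqref{6143} (with $(y,\lambda)$ in place of $(\delta y,\delta\lambda)$), so setting $\begin{pmatrix} z(t)\\ q(t)\end{pmatrix}=\mathcal T\begin{pmatrix} y(t)\\ \lambda(t)\end{pmatrix}$ with $\mathcal T=\begin{pmatrix} I+EP & E\\ P & I\end{pmatrix}$ decouples it into the block-diagonal system \eqref{dec}, i.e. $\dot z=(A-BQ^{-1}B^*P)z$ and $\dot q=-(A-BQ^{-1}B^*P)^*q$. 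Hence $z(t)=S(t)z(0)$ and $q(t)=S(T-t)^*q(T)$, and the exponential stability \eqref{est} gives $\|z(t)\|_X\leq ce^{-\nu t}\|z(0)\|_X$ and $\|q(t)\|_X\leq ce^{-\nu(T-t)}\|q(T)\|_X$: the stable mode is driven by $z(0)$ and the unstable mode, read backward in time, by $q(T)$. Inverting $\mathcal T$ yields $y=z-Eq$ and $\lambda=-Pz+(I+PE)q$, so that, using the decay bounds at the endpoints,
\begin{equation*}
\|y(T)\|_X+\|\lambda(0)\|_X\leq c\big(\|z(0)\|_X+\|q(T)\|_X\big).
\end{equation*}

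Thus it remains to bound $z(0)$ and $q(T)$ by the boundary data $y(0)$ and $\lambda(T)$. Evaluating the inverse relations at $t=0$ and $t=T$ and using $z(T)=S(T)z(0)$, $q(0)=S(T)^*q(T)$ gives $z(0)=y(0)+ES(T)^*q(T)$ and $(I+PE)q(T)=\lambda(T)+PS(T)z(0)$. Eliminating $z(0)$ and recalling, from \eqref{731-2}, that $E-S(T)ES(T)^*=-\int_0^T S(s)BQ^{-1}B^*S(s)^*\,ds=:E_T$, I obtain
\begin{equation*}
(I+PE_T)\,q(T)=\lambda(T)+PS(T)y(0).
\end{equation*}
Since $\|PS(T)\|_{L(X,X)}\leq ce^{-\nu T}$, as soon as $I+PE_T$ is boundedly invertible uniformly in $T$ this gives $\|q(T)\|_X\leq c(\|y(0)\|_X+\|\lambda(T)\|_X)$, then $\|z(0)\|_X\leq c(\|y(0)\|_X+\|\lambda(T)\|_X)$, and \eqref{7271} follows.

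The main obstacle is precisely this uniform bounded invertibility of $I+PE_T=I-P(-E_T)$. Here the point of working with the Lyapunov operator $E$ rather than with an invertible Riccati solution becomes essential: $E$ itself is in general not invertible, yet $I+PE_T$ is. Since $-E_T=\int_0^TS(s)BQ^{-1}B^*S(s)^*\,ds\geq0$ and $P\geq0$, the operator $P(-E_T)$ has the same nonzero spectrum as the self-adjoint nonnegative operator $P^{1/2}(-E_T)P^{1/2}$, so $\sigma(P(-E_T))\subset[0,\infty)$; and since $-E_T\leq -E$ monotonically in $T$, one has $\|P^{1/2}(-E_T)P^{1/2}\|\leq\|P^{1/2}(-E)P^{1/2}\|$. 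The key inequality $\|P^{1/2}(-E)P^{1/2}\|\leq1$ comes from an energy identity: differentiating $s\mapsto\langle P^{-1}\eta(s),\eta(s)\rangle_X$ along $\dot\eta=(A-BQ^{-1}B^*P)^*\eta$ and substituting the Riccati equation \eqref{riccati} gives, with $\eta(0)=P^{1/2}v$,
\begin{equation*}
\|v\|_X^2=\langle P^{1/2}(-E)P^{1/2}v,v\rangle_X+\int_0^{+\infty}\|CP^{-1}\eta(s)\|_V^2\,ds ,
\end{equation*}
whence $P^{1/2}(-E)P^{1/2}\leq I$ (the computation uses $P^{-1}$; for a merely nonnegative $P$ one passes to the self-adjoint reformulation $I+P^{1/2}EP^{1/2}$, which shares the invertibility of $I+PE$, or argues by approximation).

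The final and most delicate step is to upgrade $\leq1$ to a \emph{strict} bound $\|P^{1/2}(-E)P^{1/2}\|=1-\delta<1$, equivalently to exclude $1$ from the (compact) spectrum of $P(-E)$. This is exactly where the exponential detectability of $(A,C)$ and stabilizability of $(A,B)$ enter: if $1$ were a spectral value there would be $v_n$ with $\|v_n\|_X=1$ and $\int_0^{+\infty}\|CP^{-1}\eta_n\|_V^2\,ds\to0$, and the detectability/stabilizability assumptions preclude such an almost-unobservable, non-decaying mode. Once $\sup\sigma(P(-E))\leq1-\delta$, the monotonicity gives $\sup\sigma(P(-E_T))\leq1-\delta$ for every $T$, hence $\|(I+PE_T)^{-1}\|_{L(X,X)}\leq\delta^{-1}$ uniformly in $T$, which closes the argument. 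I expect verifying this strict spectral gap (rather than the mere bound $\leq1$, which is a routine Lyapunov computation) to be the hard part of the proof.
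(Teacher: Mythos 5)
Your reduction is correct as far as it goes: the dichotomy transformation of Lemma~\ref{dichotomy} does decouple \eqref{6181}, the boundary algebra $(I+PE_T)q(T)=\lambda(T)+PS(T)y(0)$ with $E_T=E-S(T)ES(T)^*$ is right, and \eqref{7271} would indeed follow from a bound $\|(I+PE_T)^{-1}\|_{L(X,X)}\leq c$ uniform in $T$. The genuine gap is that this uniform invertibility---which you yourself flag as ``the hard part''---is never proved, and the route you sketch for it does not work in infinite dimensions. Ruling out $1\in\sigma\big(P^{1/2}(-E)P^{1/2}\big)$ by contradiction via unit vectors $v_n$ with vanishing observation term is not an argument when the unit ball is non-compact: $1$ may lie in the approximate point spectrum with no genuine eigenvector, the $v_n$ may converge weakly to $0$, and excluding this requires precisely a quantitative observability/stabilizability estimate with constants independent of $T$. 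In fact, uniform bounded invertibility of $I+PE_T$ is essentially \emph{equivalent} to the lemma: given the lemma, the solution of \eqref{6181} obtained from $z(t)=S(t)ES(T)^*q_T$, $q(t)=S(T-t)^*q_T$ has $y(0)=0$ and $\lambda(T)=(I+PE_T)q_T$, whence $\|q_T\|_X=\|Py(T)+\lambda(T)\|_X\leq c\|(I+PE_T)q_T\|_X$; conversely your argument derives the lemma from that bound. So you have reduced the statement to a claim of the same depth, and the hypotheses on $(A,B)$ and $(A,C)$ are never actually brought to bear quantitatively. A secondary gap: your energy identity is written with $P^{-1}$, but $P$ is in general not invertible (e.g.\ $C=0$ with $A$ exponentially stable gives $P=0$); the workarounds you mention are not carried out.

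For comparison, the paper's proof avoids any spectral analysis of $PE$ and does not use the Riccati or Lyapunov operators at all; the dichotomy is only used \emph{afterwards}, in the proofs of the theorems, with this lemma supplying the bounds on the decoupled boundary data. The paper works directly on \eqref{6181}: exponential detectability gives $K\in L(X,V)$ with $A^*+C^*K$ generating an exponentially stable semigroup, and testing the $y$-equation against the backward solution $\varphi$ of $\dot\varphi=-(A^*+C^*K)\varphi$, $\varphi(T)=y(T)$, yields $\|y(T)\|_X^2\leq c\big(\int_0^T(\|Cy\|_V^2+\|Q^{-1/2}B^*\lambda\|_U^2)\,dt+\|y(0)\|_X^2\big)$, with the symmetric estimate for $\|\lambda(0)\|_X^2$ coming from stabilizability of $(A,B)$. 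The Hamiltonian structure then gives the exact identity $\frac{d}{dt}\langle y(t),\lambda(t)\rangle_X=\|Cy(t)\|_V^2+\|Q^{-1/2}B^*\lambda(t)\|_U^2$, so the integral term is controlled by the boundary values; Young's inequality with a suitably chosen parameter absorbs it, closing the estimate with constants independent of $T$. This is exactly where the structural hypotheses do quantitative work. If you wish to salvage your route, the cleanest fix is to deduce the uniform invertibility of $I+PE_T$ \emph{from} such energy estimates---but at that point the dichotomy detour is superfluous and the direct argument is shorter.
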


\begin{proof}
Since the pair $(A,C)$ is exponentially detectable, the pair $(A^*,C^*)$ is exponentially stabilizable, and thus there exists a bounded linear operator $K\in L(X,V)$ such that the $C_0$ semigroup  generated by $A^*+C^*K$ is exponentially stable.  Let $\varphi(\cdot)\in C([0,T];X)$ be the unique solution of
\begin{equation*}\left\{
\begin{split}
&\dot{\varphi}(t)=-(A^*+C^*K)\varphi(t),\;\;t\in[0,T],\\
&\varphi(T)=y(T).
\end{split}\right.
\end{equation*}
It follows from the exponential decay of the $C_0$ semigroup  that there is a constant $c_1>0$ (independent of $T$) such that 
\begin{equation}\label{decay}
\|\varphi(0)\|_X\leq c_1\|y(T)\|_X
\end{equation}
and
\begin{equation}\label{de1}
\int_0^T\|\varphi(t)\|_X^2\,dt\leq c_1\|y(T)\|_X^2.
\end{equation}
Multiplying by $\varphi(t)$ the first equation in \eqref{6181} and integrating the result over $t\in[0,T]$, we get
\begin{equation}\label{id}
\|y(T)\|_X^2=\langle y(0),\varphi(0)\rangle_X +\int_0^T \left( \big\langle BQ^{-1}B^*\lambda(t),\varphi(t)\big\rangle_X -\big\langle K^*Cy(t),\varphi(t)\big\rangle_X \right) dt.
\end{equation}
By the Cauchy-Schwarz inequality and \eqref{de1},  we see that
\begin{equation*}
\begin{split}
\int_0^T\big| \langle BQ^{-1}B^*\lambda(t),\varphi(t)\rangle_X\big|\,dt&\leq  \Big(\int_0^T\|BQ^{-1}B^*\lambda(t)\|_X^2\,dt\Big)^{1/2}\Big(\int_0^T\|\varphi(t)\|_X^2\,dt\Big)^{1/2}\\
&\leq c_2\|y(T)\|_X \|BQ^{-1/2}\|_{L(U,X)}
 \Big(\int_0^T\|Q^{-1/2}B^*\lambda(t)\|_U^2\,dt\Big)^{1/2}
\end{split}
\end{equation*}
and
\begin{equation*}
\begin{split}
\int_0^T\big|\langle K^*Cy(t),\varphi(t)\rangle_X \big|\,dt
&\leq \|K^*\|_{L(V,X)}\Big(\int_0^T\|Cy(t)\|_V^2\,dt\Big)^{1/2}\Big(\int_0^T\|\varphi(t)\|_X^2\,dt\Big)^{1/2}\\
&\leq c_3\|K^*\|_{L(V,X)}\|y(T)\|_X\Big(\int_0^T\|Cy(t)\|_V^2\,dt\Big)^{1/2}.
\end{split}\end{equation*}
These two inequalities, together with \eqref{id} and \eqref{decay}, imply that 
\begin{equation}\label{6182}
\begin{split}
\| y(T)\|_X^2&\leq c_4\Big(\int_0^T\|Cy(t)\|_V^2+\|Q^{-1/2}B^*\lambda(t)\|_U^2\,dt+\| y(0)\|_X^2\Big) ,
\end{split}
\end{equation}
for some positive constant $c_4$ (independent of $T$).

Similarly, since the pair $(A,B)$ is exponentially stabilizable, we obtain from the second equation in \eqref{6181} that
\begin{equation}\label{6171}
\|\lambda(0)\|_X^2 \leq c_5\Big(\int_0^T\|Q^{-1/2}B^*\lambda(t)\|_U^2+\|C y(t)\|_V^2\,dt+\|\lambda(T)\|_X^2\Big) ,
\end{equation}
for some positive  constant $c_5$ (independent of $T$). 

Let $c_6=\max(c_4,c_5)$.
Next, multiplying by $\lambda(t)$ the first equation in \eqref{6181} and
by $y(t)$ the second equation in \eqref{6181}, 
and then integrating 
over $t\in[0,T]$, we get from the Cauchy-Schwarz inequality that
\begin{equation*} \label{6172}
\begin{split}
\int_0^T\left( \|C y(t)\|_V^2+\|Q^{-1/2}B^*\lambda(t))\|_U^2\right)dt&=\big\langle y(T),\lambda(T)\big\rangle_X-\big\langle  y(0),\lambda(0)\big\rangle_X\\
&\leq \|\lambda(T)\|_X\| y(T)\|_X+\|y(0)\|_X\|\lambda(0)\|_X\\
&\leq c_6\|\lambda(T)\|_X^2+\frac{1}{4c_6}\| y(T)\|_X^2+ c_6\| y(0)\|_X^2+
\frac{1}{4c_6}\|\lambda(0)\|_X^2.
\end{split}
\end{equation*}
This, along with \eqref{6171} and \eqref{6182}, implies that
$$\int_0^T\left( \|C y(t)\|_V^2+\|Q^{-1/2}B^*\lambda(t)\|_U^2\right)dt\leq c_7\big(\|y(0)\|^2_X+\|\lambda(T)\|^2_X\big) ,$$
for some positive constant $c_7$ independent of $T$.
Using \eqref{6171} and \eqref{6182} again, this leads to \eqref{7271} and completes the proof.
\end{proof}
\bigskip

Using Lemmas~\ref{dichotomy} and \ref{upperbound}, we are now in position to prove Theorem~\ref{thm1LQ}.

\begin{proof}[\textbf{Proof of Theorem~\ref{thm1LQ}}]
Let $\delta y(\cdot)$ and $\delta \lambda(\cdot)$ be defined by \eqref{add1}. 
By using the same dichotomy transformation 
\begin{equation}\label{add2}
\begin{pmatrix}
z(t) \\ q(t)
\end{pmatrix}
=\begin{pmatrix}
I+E P & E\\
P      &  I
\end{pmatrix}
\begin{pmatrix}
\delta y(t) \\ \delta\lambda (t)
\end{pmatrix},\;\;t\in[0,T],
\end{equation}
as in the proof of Lemma~\ref{dichotomy}, we obtain a decoupled evolution system \eqref{dec}.
Consequently, we have
\begin{equation}\label{add3}
z(t)=S(t)z(0) \;\;\text{and}\;\; q(t)=S(T-t)^*q(T),\;t\in[0,T],
\end{equation}
where the $C_0$ semigroup $(S(t))_{t\geq 0}$ is generated by the operator $A-BQ^{-1}B^*P$ satisfying the exponential decay estimate \eqref{est},
 and $(S(t)^*)_{t\geq 0}$ is its adjoint $C_0$ semigroup.

Note that $u^T(t)-u_s=Q^{-1}B^*\delta\lambda(t)$ for a.e. $t\in[0,T]$. To derive the estimate \eqref{6175},  according to \eqref{add2} and \eqref{add3}, it suffices to 
show that the norms of $z(0)$ and $q(T)$ have an upper bound which is independent of $T$. Note from \eqref{add2}
 that
\begin{equation*} 
z(0)=(I+ E P)\delta y(0)+E\delta\lambda(0),\;\;\;\;\;\;q(T)=P\delta y(T)+\delta\lambda(T).
\end{equation*}
Therefore, in order to complete the proof, it suffices to give an upper bound for $\|\delta y(T)\|_X$, as well as for $\|\delta\lambda(0)\|_X$, which follows from Lemma~\ref{upperbound}.  
The theorem is proved.
\end{proof}

Before turning to the proof of Theorem~\ref{periodictarget}, by using Lemma~\ref{dichotomy} we are now in a position to formulate explicitly the optimal extremal  triple $(y^\Pi(\cdot), u^\Pi(\cdot),\lambda^\Pi(\cdot))$ for  the periodic optimal control problem $(LQ^\Pi)$, thus proving the contents of Remark \ref{explicit}.
 
\begin{lemma}\label{lemma_explicit}
Under the assumptions of Theorem~\ref{periodictarget}, the (unique) optimal\; 
$\Pi$-periodic extremal triple \\ $(y^\Pi(\cdot),\lambda^\Pi(\cdot),u^\Pi(\cdot))$  of the problem $(LQ^\Pi)$ is given by 
$$
y^\Pi(t)=z(t)- E q(t),\quad \lambda^\Pi(t)=-Pz(t)+(I+PE)q(t),\quad
u^\Pi(t)=u_d(t)+Q^{-1}B^*\lambda^\Pi(t),$$
for almost every $t\in[0,\Pi]$.
Here, $P$ and $ E$ are accordingly linear bounded operators defined by \eqref{riccati} and \eqref{731-2},  $(z(t),q(t))$, $t \in[0,\Pi]$, are the periodic trajectories given by \eqref{tian6} and \eqref{tian7} below, respectively.
\end{lemma}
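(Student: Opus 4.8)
The plan is to reduce the periodic two-point boundary value problem \eqref{xiaoextremalsystLQ} to an explicitly solvable, \emph{decoupled} system by invoking the dichotomy transformation of Lemma~\ref{dichotomy}. First I would rewrite the extremal system \eqref{xiaoextremalsystLQ} in the compact inhomogeneous form
\[
\frac{d}{dt}\begin{pmatrix} y^\Pi(t) \\ \lambda^\Pi(t) \end{pmatrix} = M \begin{pmatrix} y^\Pi(t) \\ \lambda^\Pi(t) \end{pmatrix} + \begin{pmatrix} Bu_d(t) \\ -C^*Cy_d(t) \end{pmatrix}, \qquad \begin{pmatrix} y^\Pi(0) \\ \lambda^\Pi(0) \end{pmatrix} = \begin{pmatrix} y^\Pi(\Pi) \\ \lambda^\Pi(\Pi) \end{pmatrix},
\]
with $M$ as in \eqref{eqxlambda}, i.e., the inhomogeneous counterpart of the homogeneous system treated in Lemma~\ref{dichotomy}. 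Since $(LQ^\Pi)$ is convex, optimal periodic triples correspond bijectively to solutions of this periodic extremal problem; thus existence, uniqueness and the explicit formula will all follow once the periodic problem is solved explicitly.

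Next I would apply the constant, boundedly invertible transformation $\mathcal T = \begin{pmatrix} I+EP & E \\ P & I \end{pmatrix}$ from Lemma~\ref{dichotomy}, setting $(z,q)^\top = \mathcal T\,(y^\Pi,\lambda^\Pi)^\top$. Because $\mathcal T$ is independent of $t$, it preserves the periodic boundary conditions ($z(0)=z(\Pi)$ and $q(0)=q(\Pi)$) and conjugates $M$ to the block-diagonal operator of \eqref{dec}; applying $\mathcal T$ to the forcing term yields the decoupled system
\[
\dot z(t) = (A-BQ^{-1}B^*P)z(t) + g_1(t), \qquad \dot q(t) = -(A-BQ^{-1}B^*P)^* q(t) + g_2(t),
\]
with $g_1(t) = (I+EP)Bu_d(t) - EC^*Cy_d(t)$ and $g_2(t) = PBu_d(t) - C^*Cy_d(t)$, read in the mild sense. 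Inverting $\mathcal T$ recovers exactly the asserted representation $y^\Pi = z - Eq$ and $\lambda^\Pi = -Pz + (I+PE)q$, with $u^\Pi = u_d + Q^{-1}B^*\lambda^\Pi$ by \eqref{xiao61718}; it therefore remains to produce the unique $\Pi$-periodic $z$ and $q$.

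For $z$, the generator $A-BQ^{-1}B^*P$ generates the exponentially stable semigroup $(S(t))_{t\ge0}$ of \eqref{est}; since $\|S(\Pi)^n\|=\|S(n\Pi)\|\to 0$, its spectral radius is strictly less than $1$, so $I-S(\Pi)$ is boundedly invertible (with inverse the convergent Neumann series). Imposing $z(0)=z(\Pi)$ on the variation-of-constants formula then fixes $z(0)=(I-S(\Pi))^{-1}\int_0^\Pi S(\Pi-\tau)g_1(\tau)\,d\tau$ and delivers the stated formula for $z(t)$. The equation for $q$ is the genuine difficulty, because its generator $-(A-BQ^{-1}B^*P)^*$ is anti-stable (expanding) and generates no forward semigroup. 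I would resolve this by the time-reversal substitution $\tilde q(s)=q(\Pi-s)$, which satisfies $\dot{\tilde q}(s)=(A-BQ^{-1}B^*P)^*\tilde q(s)-g_2(\Pi-s)$ with the same periodicity, and whose generator $(A-BQ^{-1}B^*P)^*$ generates the exponentially stable adjoint semigroup $(S(t)^*)_{t\ge0}$. The identical stable argument produces a unique $\Pi$-periodic $\tilde q$, and reverting $t=\Pi-s$ yields the stated formula for $q(t)$ (here the invertibility of $I-S(\Pi)^*$ is used).

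Finally, uniqueness is immediate: $\mathcal T$ is a bijection, so the unique periodic pair $(z,q)$ corresponds to a unique periodic $(y^\Pi,\lambda^\Pi)$, hence, by convexity of $(LQ^\Pi)$, to the unique optimal triple. I expect the main obstacles to be twofold: justifying the conjugation of the \emph{unbounded} operator $M$ by the bounded operators $\mathcal T,P,E$ at the level of mild solutions (as already carried out in Lemma~\ref{dichotomy}), and the correct handling of the expanding $q$-component through time reversal, which is what allows the explicit periodic solution to be written entirely in terms of the stable semigroups $S(\cdot)$ and $S(\cdot)^*$.
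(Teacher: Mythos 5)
Your proposal is correct and follows essentially the same route as the paper: the same dichotomy transformation $\mathcal T$ from Lemma~\ref{dichotomy} decouples the inhomogeneous periodic extremal system, the periodic initial data are fixed via the bounded invertibility of $I-S(\Pi)$ (the paper cites a result of Barbu--Pavel where you give the equivalent Neumann-series argument), and inverting $\mathcal T$ yields the stated formulas. Your explicit time-reversal treatment of the expanding $q$-equation is exactly what the paper compresses into ``applying similar arguments,'' and it reproduces formula \eqref{tian7} verbatim.
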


\begin{proof}
Using the dichotomy transformation already used in the proof of Lemma~\ref{dichotomy},
\begin{equation}\label{jin4}
\begin{pmatrix}
z(t) \\ q(t)
\end{pmatrix}
=\begin{pmatrix}
I+E P & E\\
P      &  I
\end{pmatrix}
\begin{pmatrix}
y^\Pi(t) \\ \lambda^\Pi (t)
\end{pmatrix},\;\;t\in[0,\Pi],
\end{equation}
we can uncouple the system \eqref{xiaoextremalsystLQ} to 
\begin{equation}\label{tian1}
\dot z(t)=(A-BQ^{-1}B^*P)z(t)+\big((I+ E P)Bu_d(t)- EC^*Cy_d(t)\big),\;\;t\in[0,\Pi],
\end{equation}
and
\begin{equation}\label{tian2}
\dot q(t)=-(A-BQ^{-1}B^*P)^*q(t)+\big(PBu_d(t)-C^*Cy_d(t)\big),\;\;t\in[0,\Pi].
\end{equation}
Now, using the periodic boundary conditions, we are going to determine the initial data $z(0)$ and $q(T)$ for the equations \eqref{tian1} and \eqref{tian2}, respectively.  
It follows from \eqref{jin4} and from the periodic condition in \eqref{xiaoextremalsystLQ} that 
\begin{equation}\label{jin6}
z(0)=z(\Pi).
\end{equation}
By the Duhamel formula, we get from \eqref{tian1} that
\begin{equation}\label{jin7}
z(\Pi)=S(\Pi)z(0)+\int_0^\Pi S(\Pi-\tau)\big((I+ E P)Bu_d(\tau)- EC^*Cy_d(\tau)\big)\,d\tau,
\end{equation}
where $(S(t))_{t\geq 0}$ is the $C_0$ semigroup generated by the operator $A-BQ^{-1}B^*P$ satisfying the exponential decay estimate \eqref{est}.
It follows from \cite[Corollary 2.1]{Barbu} that the operator $I-S(\Pi)$ is boundedly invertible.  Hence,  we obtain from \eqref{jin6} and \eqref{jin7} that
\begin{equation*}
z(0)=(I-S(\Pi)\big)^{-1}\int_0^\Pi S(\Pi-\tau)\big((I+ E P)Bu_d(\tau)- EC^*Cy_d(\tau)\big)\,d\tau.
\end{equation*}
Therefore, we get
\begin{multline}\label{tian6}
z(t)=S(t)(I-S(\Pi)\big)^{-1}\int_0^\Pi S(\Pi-\tau)\big((I+ E P)Bu_d(\tau)- EC^*Cy_d(\tau)\big)\,d\tau
\\+\int_0^tS(t-\tau)\big((I+E P)Bu_d(\tau)-EC^*Cy_d(\tau)\big)\,d\tau,\;\;t \in[0,\Pi].
\end{multline}
Applying similar arguments to \eqref{tian2}, we also obtain that  
\begin{multline}\label{tian7}
q(t)=S(\Pi-t)^*(I-S(\Pi)^*\big)^{-1}\int_0^\Pi S(\Pi-\tau)^*\big(-PBu_d(\Pi-\tau)+C^*Cy_d(\Pi-\tau))\big)\,d\tau
\\+\int_0^{\Pi-t}S(\Pi-t-\tau)^*\big(-PBu_d(\Pi-\tau)+C^*Cy_d(\Pi-\tau)\big)\,d\tau,\;\;t \in[0,\Pi].
\end{multline}
Noting from the transformation \eqref{jin4} that  
\begin{equation*}
\begin{pmatrix}
y^\Pi(t) \\ \lambda^\Pi(t)
\end{pmatrix}
=
\begin{pmatrix}
I & - E \\ -P & I+P E
\end{pmatrix}
\begin{pmatrix}
z(t) \\ q(t)
\end{pmatrix},\;\; t\in[0,\Pi],
\end{equation*}
the lemma follows.
\end{proof}

We can now complete the proof of Theorem~\ref{periodictarget}.

\begin{proof}[\textbf{Proof of Theorem~\ref{periodictarget}}]
Setting
\begin{equation*}
\delta y(t)=y^T(t)-y^\Pi(t),\;\;\delta\lambda(t)=\lambda^T(t)-\lambda^\Pi(t),\;\;\delta u(t)=u^T(t)-u^\Pi(t),\;\;
t\in[0,T],
\end{equation*}
we get from \eqref{j152} and \eqref{xiaoextremalsystLQ}  that
\begin{equation}\label{xiao1012}
\left\{
\begin{split}
\frac{d\delta y(t) }{dt}&= A\delta y(t)+BQ^{-1}B^*\delta\lambda(t), \;\; \;t\in[0,T],\\
\frac{d\delta\lambda(t)}{dt} &= C^*C\delta y(t)-A^*\delta\lambda(t),\;\;\;t\in[0,T],\\
\end{split}
\right.
\end{equation}
with the terminal conditions
\begin{equation*}\label{731-3}
\delta y(0)=y_0-y^\Pi(0),\;\;\;\delta\lambda(T)=-\lambda^\Pi\Big(T-\left[\frac T\Pi\right]\Pi\Big).
\end{equation*}
Here, $[x]$ denotes the largest integer less than or equal to $x$.
Using the dichotomy transformation
\begin{equation}\label{731-4}
\begin{pmatrix}
v(t) \\ w(t)
\end{pmatrix}
=\begin{pmatrix}
I+E P &  E\\
P      &  I
\end{pmatrix}
\begin{pmatrix}
\delta y(t) \\ \delta\lambda (t)
\end{pmatrix},\;\;t\in[0,T],
\end{equation}
where $P$ and $ E$ are accordingly given by \eqref{riccati} and \eqref{731-2},
we transform the system \eqref{xiao1012} to  
\begin{equation*}
\frac{d}{dt}
\begin{pmatrix}
v(t) \\ w(t)
\end{pmatrix}
=
\begin{pmatrix}
A-BQ^{-1}B^*P &0 \\  
0 & -(A-BQ^{-1}B^*P)^* 
\end{pmatrix}
\begin{pmatrix}
v(t) \\ w(t)
\end{pmatrix},\;\; t\in[0,T].
\end{equation*}
Therefore,
\begin{equation}\label{731-5}
v(t)=S(t)v(0) \;\;\text{and}\;\; w(t)=S(T-t)^*w(T),\;t\in[0,T],
\end{equation}
where $(S(t))_{t\geq 0}$ is the exponentially stable $C_0$ semigroup generated by $A-BQ^{-1}B^*P$.
In particular, by \eqref{731-4}, we have
\begin{equation}\label{9181}
v(0)=(I+E P)\delta y(0)+ E\delta \lambda(0)\;\;\;\;\text{and}\;\;\;
w(T)=\delta \lambda(T)+P\delta y(T).
\end{equation}
By Lemma~\ref{upperbound}, we infer that the stability estimate 
$$
\|\delta y(T)\|_X+\|\delta\lambda(0)\|_X\leq c\big(\|\delta y(0)\|_X+\|\delta\lambda(T)\|_X\big),
$$
holds true for some positive constant $c$ independent of $T$.
This estimate, together with \eqref{9181}, \eqref{731-5}, as well as the bounded invertibility of the dichotomy transformation \eqref{731-4}, lead to the estimate
$$
\|\delta y(t)\|_X+\|\delta \lambda(t)\|_X\leq c(\|\delta y(0)\|_X+\|\delta\lambda(T)\|_X)(e^{-\nu t}+e^{-\nu (T-t)}),\;\;\forall t\in[0,T],
$$
for some positive constants $c$ and $\nu$ independent of $T$. The theorem is proved.
\end{proof}

\subsection{Proof of Theorem~\ref{731-10}}\label{1251}
We follow the arguments of the proofs of Lemmas~\ref{dichotomy}, \ref{upperbound} and of Theorem~\ref{thm1LQ}, as well as those of the proof of \cite[Theorem 1]{TZ1}.

According to the assumptions, the Hamiltonian $H$ is twice continuously Fr\'echet differentiable in $X\times X\times U$, and thus for any $(y,\lambda, u)\in X\times X\times U$, there exists a constant $R_0>0$ such that,
for any $0<R< R_0$,
  the asymptotic expansion
formula 
\begin{equation*}
\begin{split}
H_\bigstar(y+\delta y,\lambda+&\delta\lambda,u+\delta u)-H_\bigstar(y,\lambda,u)\\
&=H_{\bigstar y}(y,\lambda,u)\delta y
+H_{\bigstar \lambda}(y,\lambda,u)\delta \lambda+H_{\bigstar u}(y,\lambda,u)\delta u+  o(\delta y,\delta \lambda,\delta u),
\end{split}
\end{equation*}
holds for any small perturbation $(\delta y,\delta \lambda,\delta u)$ verifying 
\begin{equation}\label{727-6}
\|\delta y\|_X+\|\delta\lambda\|_X+\|\delta u\|_U\leq R.
\end{equation}
Here the symbol $\bigstar$ stands for the Fr\'echet derivative of $H$ with respect to
the variable $y$ or $\lambda$ or $u$, and $o(\delta h)$ is the remaining higher-order terms with respect to $\delta h$.

We start by defining perturbations of $(y^T(\cdot),\lambda^T(\cdot),u^T(\cdot))$ with respect to $(y_s,\lambda_s,u_s)$, by
\begin{equation*}
\delta y(t)=y^T(t)-y_s,\;\;\delta \lambda(t)=\lambda^T(t)-\lambda_s,\;\;\delta u(t)=u^T(t)-u_s,\;\;t\in[0,T].
\end{equation*}
Under the assumptions of the theorem, we  make the following \textit{a priori}  hypotheses: 
\begin{equation}\label{722-6}
\begin{split}
&(i).\;\;\;\;\;\;\|(\delta y(t),\delta \lambda(t),\delta u(t))\|_{X\times X\times U}\leq R\;\;\text{for a.e.}\;\;t\in[0,T];\\
&(ii).\;\;\;\;\;\;\int_0^T \left( \|\delta y(t)\|_X^2+\|\delta \lambda(t)\|_X^2\right) dt\leq R,
\end{split}
\end{equation}
where the positive constant $R$ is sufficiently small.
These two hypotheses will be verified a posteriori  at the end by an appropriate choice of smallness constraint \eqref{727-88}.
First of all, it follows from \eqref{721-2} and \eqref{721-3} that, at the point $(y_s,\lambda_s,u_s)$, 
\begin{equation*}\label{721-4}
H_{uy}\delta y(t)+H_{u\lambda}\delta\lambda(t)+H_{uu}\delta u(t)+o(\delta y(t),\delta \lambda(t),\delta u(t))=0
\;\;\text{for a.e.}\;\;t\in[0,T].
\end{equation*}
Since $H_{uu}^{-1}$ is bounded, we get
\begin{equation}\label{721-61c}
\delta u(t)=-H^{-1}_{uu}\big(H_{uy}\delta y(t)+H_{u\lambda}\delta \lambda(t)\big)+o(\delta y(t),\delta \lambda(t)).
\end{equation}
Therefore, using \eqref{727-10} and \eqref{727-11}, we infer from \eqref{721-1}, \eqref{721-5} and \eqref{721-61c} that
\begin{equation}\label{721-8}
\frac{d}{dt}
\begin{pmatrix}
\delta y(t)\\ \delta\lambda(t)
\end{pmatrix}
=
\begin{pmatrix}
\mathcal A & -H_{\lambda u}H_{uu}^{-1}H_{u\lambda}\\
\mathcal{C}^*\mathcal C& -\mathcal{A}^*
\end{pmatrix}
\begin{pmatrix}
\delta y(t)\\ \delta\lambda(t)
\end{pmatrix}
+o(\delta y(t),\delta\lambda (t)),
\end{equation}
with the two-point boundary conditions
\begin{equation}\label{10221}
\delta y(0)=y_0-y_s,\;\;\;\;\delta\lambda(T)=-\lambda_s.
\end{equation}
Note that the principal part of the equation \eqref{721-8} has the same structure as the operator block $M$ given by
\eqref{eqxlambda}. As already mentioned, in comparison with the proof of Theorem~\ref{thm1LQ}, the difficulty   is here to  deal carefully with the  higher-order remaining terms in \eqref{721-8}.

Next, by using arguments similar to those in the proof of Lemma~\ref{dichotomy}, we are going to uncouple the principal part of the linearized system
\eqref{721-8}.
Let $ \mathcal P\in L(X,X)$ be the unique nonnegative definite, self-adjoint operator solution of the operator algebraic Riccati equation (see the same reasonings for \eqref{riccati})
\begin{equation*}\label{721-6}
\mathcal A^* \mathcal P+ \mathcal P\mathcal A+\mathcal C^*\mathcal C+ \mathcal PH_{\lambda u}H_{uu}^{-1}H_{u\lambda} \mathcal P=0.
\end{equation*}
Moreover,  the operator $\mathcal A+H_{\lambda u}H_{uu}^{-1}H_{u\lambda} \mathcal P$ generates an exponentially stable $C_0$ semigroup 
$(\mathcal S(t))_{t\geq 0}$ in $X$, i.e.,
\begin{equation}\label{721-10}
\|\mathcal S(t)\|_{L(X,X)}\leq c_1e^{-\nu t}\;\;\;\text{for}\;\;t\geq0,
\end{equation}
for some positive constants $c_1$ and $\nu$. We then define the linear bounded selfadjoint operator on $X$
\begin{equation*}
\mathcal E= \int_0^{+\infty} \mathcal S(t) H_{\lambda u}H_{uu}^{-1}H_{u\lambda}
\mathcal S(t)^*\,dt.
\end{equation*}
Using the dichotomy transformation
\begin{equation}\label{721-11}
\begin{pmatrix}
v(t) \\ w(t)
\end{pmatrix}
=\begin{pmatrix}
I+\mathcal E  \mathcal P & \mathcal E\\
 \mathcal P      &  I
\end{pmatrix}
\begin{pmatrix}
\delta y(t)\\ \delta\lambda(t)
\end{pmatrix}\;\;\;\;\;t\in[0,T],
\end{equation}
we transform the system \eqref{721-8} to 
\begin{equation*}\label{10301}
\frac{d}{dt}
\begin{pmatrix}
v(t)\\ w(t)
\end{pmatrix}
=\begin{pmatrix}
\mathcal A+H_{\lambda u}H_{uu}^{-1}H_{u\lambda}  \mathcal P&0\\
0 & -\big(\mathcal A+H_{\lambda u}H_{uu}^{-1}H_{u\lambda} \mathcal  P\big)^*
\end{pmatrix}
\begin{pmatrix}
v(t)\\ w(t)
\end{pmatrix}
+o(v(t),w(t)).
\end{equation*}
Solving the first equation in forward time and the second equation in backward time, and using \eqref{721-10}, we get
\begin{equation}\label{722-2}
\|v(t)\|_X+\|w(t)\|_X\leq 4c_1(e^{-\nu t/2}\|v(0)\|_X+e^{-\nu(T-t)/2}\|w(T)\|_X),
\end{equation}
for every $t\in[0,T]$.

The remainder of the proof consists of determining the values $(\|v(0)\|_X,\|w(T)\|_X)$ from the terminal conditions \eqref{10221}.
We first claim that the inequality
\begin{equation}\label{722-4}
\|\delta y(T)\|_X+\|\delta\lambda(0)\|_X\leq c_2\big(\|y_0-y_s\|_X+\|\lambda_s\|_X\big)+o(R),
\end{equation}
holds for  some constant $c_2$ independent of $T$ (the proof of this claim is postponed to the end).  
It follows from the transformation \eqref{721-11}  that
\begin{equation*}
v(0)=(I+\mathcal E  \mathcal P)\delta y(0)+\mathcal E\delta \lambda(0),\;\;\;\; w(T)= \mathcal P\delta y(T)+\delta \lambda(T).
\end{equation*}
Hence, we infer from  \eqref{721-11}, \eqref{722-2} and \eqref{722-4} that
\begin{equation}\label{727-886}
\|\delta y(t)\|_X+\|\delta \lambda(t)\|_X
\leq \Big(c_3(\|y_0-y_s\|_X+\|\lambda_s\|_X)+o(R) \Big)\big(e^{-\nu t/2}+e^{-\nu(T-t)/2}\big)
\end{equation}
for every $t\in[0,T]$, with some constant $c_3$ independent of $T$.
This, together with \eqref{721-61c}, leads to a similar estimate for $\delta u(\cdot)$.
Therefore, there exists $\varepsilon >0$ such that the estimates \eqref{722-6} hold true whenever the inequality \eqref{727-88} holds.
As a consequence of \eqref{727-886}, the exponential turnpike property \eqref{727-18} is proved.

Finally, let us prove the claim \eqref{722-4}, which is analogous to that in Lemma~\ref{upperbound}.  
For the first equation in \eqref{721-8}, since $(\mathcal A^*, \mathcal C^*)$ is exponentially detectable, by the same reasoning as for \eqref{6182}, there exists a $c_0>0$ independent of $T$ such that
\begin{equation}\label{722-10}
\|\delta y(T)\|_X^2 \leq 
c_0\Big(\int_0^T \left( \|\mathcal C\delta y(t)\|_V^2+\|H_{uu}^{-1/2}H_{u\lambda}\delta \lambda(t)\|_U^2\right) dt+\|\delta y(0)\|_X^2\Big)
+o(R).
\end{equation}
Similarly, we obtain
\begin{equation}\label{722-11}
\|\delta\lambda (0)\|_X^2\leq c_0\Big(\int_0^T \left( \|H^{-1/2}_{uu}H_{u\lambda }\delta\lambda(t)\|_U^2+\|\mathcal C\delta y(t)\|_V^2\right) dt
+\|\delta\lambda(T)\|_X^2\Big)+o(R).
\end{equation}
Multiplying by $\delta\lambda(t)$ the first equation in \eqref{721-8} and by $\delta y(t)$ the second equation in \eqref{721-8},
and then integrating over $t\in[0,T]$, we get that
\begin{multline*}
\int_0^T \left( \|H^{-1/2}_{uu}H_{u\lambda }\delta\lambda(t)\|_U^2+\|\mathcal C \delta y(t)\|_V^2\right) dt
\leq \|\delta\lambda(T)\|_X\|\delta y(T)\|_X+\|\delta y(0)\|_X\|\delta\lambda(0)\|_X+o(R)\\
\leq \sigma\big(\|\delta\lambda(0)\|_X^2+\|\delta y(T)\|_X^2\big)+\frac{1}{4\sigma}\big(\|\delta y(0)\|_X^2+\|\delta\lambda (T)\|_X^2\big)
+o(R)
\end{multline*}
for any real number $\sigma>0$.
This, together with \eqref{722-10} and \eqref{722-11}, implies that
\begin{multline*}
\int_0^T \left( \|H^{-1/2}_{uu}H_{u\lambda }\delta\lambda(t)\|_U^2+\|\mathcal C\delta y(t)\|_V^2\right) dt
\leq 2c_0\sigma \int_0^T \left( \|H^{-1/2}_{uu}H_{u\lambda }\delta\lambda(t)\|_U^2+\|\mathcal C\delta y(t)\|_V^2\right) dt \\
+\Big(\frac{1}{4\sigma}+2c_0\sigma\Big)\big( \|\delta y(0)\|_X^2+\|\delta\lambda (T)\|_X^2\big)+o(R).
\end{multline*}
Choosing $\sigma=\frac{1}{4c_0}$, we get that
\begin{equation*}
\int_0^T \left( \|H^{-1/2}_{uu}H_{u\lambda }\delta\lambda(t)\|_U^2+\|\mathcal C\delta y(t)\|_V^2\right) dt
\leq (2c_0+1)\big( \|\delta y(0)\|_X^2+\|\delta\lambda (T)\|_X^2\big)+o(R).
\end{equation*}
This, along with \eqref{10221}, \eqref{722-10} and \eqref{722-11}, implies and completes the proof of the claim \eqref{722-4}.

\bigskip\bigskip

\noindent \textbf{Acknowledgment}.
The authors acknowledge the financial support by the grant FA9550-14-1-0214 of the EOARD-AFOSR. The second author was partially supported by the National Natural Science Foundation of China under grants 11501424 and 11371285.
The third author was partially supported by the Advanced Grant DYCON (Dynamic Control) of the European Research Council Executive Agency, FA9550-15-1-0027 of AFOSR,  the MTM2014-52347 Grant of the MINECO (Spain) and ICON of the French ANR.

\bigskip


\begin{thebibliography}{99}

\bibitem{Kokotovic} B. D. O. Anderson, P. V. Kokotovic, Optimal control problems over large time intervals, Automatica 23 (1987), 355-363.

\bibitem{AMP}  G. Allaire, A. M\"unch,  F. Periago, Long time behavior of a two-phase optimal design for the heat equation, SIAM J. Control and Optim. 48 (2010), 5333-5356.

\bibitem{AL} Z. Artstein, A. Leizarowitz, 
Tracking periodic signals with the overtaking criterion, IEEE Transactions on Automatic Control 30 (1985), 1123-1126.  

\bibitem{Barbu} V. Barbu, N. H. Pavel, Periodic optimal control in Hilbert space, Appl Math Optim 33 (1996), 169-188.

\bibitem{BardosLebeauRauch}
C. Bardos, G. Lebeau, J. Rauch,
Sharp sufficient conditions for the observation, control, and stabilization of waves from the boundary,
SIAM J. Control Optim. {30} (1992), no. 5, 1024-1065.



\bibitem{CLLP} P.  Cardaliaguet,  J.-M. Lasry,  P.-L. Lions,  A. Porretta,
Long time average of mean field games with a nonlocal coupling, SIAM J. Control Optim.,  51 (2013), 3558-3591.


\bibitem{CHJ} D. A. Carlson, A. Haurie, A. Jabrane,
Existence of overtaking solutions to infinite-dimensional control problems on unbounded time intervals, 
SIAM J. Control Optim. 25 (1987), 1517-1541. 

\bibitem{CarlsonBOOK} D. A. Carlson, A. B. Haurie, A. Leizarowitz, 
Infinite Horizon Optimal Control, 2nd ed. New York: Springer Verlag, 1991.


\bibitem{Grune1} T. Damm, L. Gr\"une,  M. Stieler,  K. Worthmann, An exponential turnpike theorem 
for dissipative discrete time optimal control problems, SIAM J. Control Optim. 52  (2014), 1935-1957.



\bibitem{DorfmanSamuelsonSolow}
R. Dorfman, P.A. Samuelson, R. Solow,
Linear Programming and Economic Analysis,
New York, McGraw-Hill, 1958.


\bibitem{Faulwasser1} T. Faulwasser, M. Korda, C. N. Jones, D. Bonvin, On turnpike and dissipativity properties of continuous-time optimal control problems. arXiv: 1509.07315.

\bibitem{AMPL} 
R. Fourer, D.M. Gay, B.W. Kernighan,
AMPL: A Modeling Language for Mathematical Programming,
Duxbury Press, Second Edition, 2002.

\bibitem{EG} 
E. Gilbert,  Optimal periodic control: a general theory of necessary conditions,  SIAM J. Control Optim. 15 (1977),  717-746. 


 
 \bibitem{G3} L. Gr\"une, M. M\"uller, On the relation between strict dissipativity and turnpike properties, Systems and Control Letters 90 (2016),  45-53.
 
\bibitem{GAB}
 F. Grognard, A.R. Akhmetzhanov, O. Bernard, Periodic optimal control for biomass productivity maximization in a photobioreactor using natural light, Inria Research Report, 
 Project-Teams Biocore, 7929 (2012).
  

\bibitem{GTZ} M. Gugat, E. Tr\'elat, E. Zuazua, Optimal Neumann control for the 1D wave equation: Finite horizon, infinite horizon, boundary tracking terms and the turnpike property. Systems and Control Letters 90 (2016), 61-70.



\bibitem{ItoKunisch}
K. Ito, K. Kunisch,
Lagrange Multiplier Approach to Variational Problems and Applications,
Advances in Design and Control, 15, Society for Industrial and Applied Mathematics (SIAM), Philadelphia, PA, 2008.

\bibitem{K1}
S. Koshkin, Positive semigroups and algebraic Riccati equations in Banach spaces, Positivity 20 (2016), 541-563. 

\bibitem{Lions} J. L.  Lions, Optimal Control for Systems Governed by Partial Differential Equations. Springer, Berlin, 1971.

\bibitem{Lukes} D. L. Lukes, Optimal regulation of nonlinear dynamical systems, SIAM J. Control 7 (1969),  75-100.

\bibitem{LW} H. Lou,  W. Wang,  Turnpike properties of optimal relaxed control problems. Preprint.

\bibitem{LiXunjing} X. Li, J. Yong, Optimal Control Theory for Infinite-dimensional Systems. Systems and Control: Foundations and Applications, Birkh\"auser Boston, Inc., Boston, MA, 1995.

\bibitem{Mc} L. W. McKenzie, Turnpike theorems for a generalized Leontief model, Econometrica 31 (1963), 165-180.

\bibitem{Pa} A. Pazy, Semigroups of Linear Operators and Applications
to Partial Differential Equations, Springer-Verlag, 1983.




 \bibitem{PZ1} A. Porretta, E. Zuazua, Long time versus steady state optimal control,
SIAM J. Control and Optim. 51 (2013), 4242-4273.

\bibitem{PZ2} A. Porretta, E. Zuazua, Remarks on long time versus steady state optimal control. Springer-INdAM, Mathematical Paradigms of Climate Science, P. M. Cannarsa et al. eds, to appear.



\bibitem{R}  A. V. Rao, K. D. Mease, Dichotomic basis approach to solving hypersensitive optimal control problems,  Automatica 35 (1999), 633-642.

\bibitem{Rapaport}
A. Rapaport, P. Cartigny,
Turnpike theorems by a value function approach,
ESAIM: Control Optim. Calc. Var. {10} (2004), 123-141.

\bibitem{Rapaport2}
A. Rapaport, P. Cartigny,
Competition between most rapid approach paths: necessary and sufficient conditions,
J. Optim. Theory Appl. {124} (2005), no. 1, 1-27.

\bibitem{Rockafellar1973}
R. T. Rockafellar,
Saddle points of Hamiltonian systems in convex problems of Lagrange,
J. Optimization Theory Appl. {12} (1973), 367-390.

\bibitem{Sak} N. Sakamoto, Analysis of the Hamilton-Jacobi equation in nonlinear control theory by symplectic geometry, SIAM J. Control Optim., 40 (2002), no. 6,  1924-1937.

\bibitem{Samuelson1976}
P. A. Samuelson, 
The periodic turnpike theorem,
Nonlinear Anal. 1 (1976), no. 1, 3--13. 



\bibitem{T} F. Tr\"oltzsch, Optimal Control of Partial Differential Equations. Theory, Methods and Applications.
Graduate Studies in Mathematics, 112,  American Mathematical Society,  2010. 

\bibitem{Trelatbook}
E. Tr\'elat,
Contr\^ole Optimal : Th\'eorie {\rm\&} Applications,
Vuibert, Collection "Math\'ematiques Concr\`etes", 2005.

\bibitem{TZ1} E. Tr\'elat, E. Zuazua, The turnpike property in finite-dimensional nonlinear optimal control,
 J. Differential Equations 258 (2015), 81-114.
 
 \bibitem{TWZ} E. Tr\'elat, L. Wang, Y. Zhang, Impulse and sampled-data optimal control of heat equations, and error estimates. To appear in SIAM J. Control and Optim..
 
 \bibitem{TZZ1} E. Tr\'elat, C. Zhang, E. Zuazua, Measure-turnpike property for dissipative infinite-dimensional optimal control problems. Preprint.
 
\bibitem{IPOPT} 
A. W\"achter, L.T. Biegler,
On the implementation of an interior-point filter line-search algorithm for large-scale 
nonlinear programming,
Mathematical Programming 106 (2006), 25-57.

 
\bibitem{WK} R. R. Wilde,  P. V.  Kokotovic,  A dichotomy in linear control theory, IEEE Trans. Automat, Control, 17 (1972), 382-383.



\bibitem{zabczyk} J. Zabczyk, Mathematical Control Theory: An Introduction.
Systems and Control: Foundations and Applications.
Birkh\"auser Boston, Inc., Boston, MA, 1992. 

\bibitem{Zanon}
M. Zanon, L. Gr\"une, M. Diehl,
Periodic optimal control, dissipativity and MPC.
Preprint Hal (2015).


\bibitem{Z2} A. J. Zaslavski, Turnpike Properties in the Calculus of Variations and Optimal Control. Vol. 80. Springer, 2006.

\bibitem{Z3}
A. J.  Zaslavski, Turnpike Phenomenon and Infinite Horizon Optimal Control, in: Springer Optimization and Its Applications, vol. 99, Springer, Cham, 2014.

\bibitem{Z4}
A. J. Zaslavski, Turnpike Theory of Continuous-Time Linear Optimal Control Problems, in: Springer Optimization and Its Applications, vol. 104, Springer,
Cham, 2015.

\bibitem{Zhu}
J. Zhu, E. Tr\'elat, M. Cerf,
Minimum time control of the rocket attitude reorientation associated with orbit dynamics,
SIAM J. Control  Optim. {54} (2016), no. 1, 391-422.


\bibitem{Zcon} E. Zuazua, Controllability and observability of partial differential equations: some results and open problems,  in:  Handbook of Differential Equations:
Evolutionary Differential Equations, vol. 3, Elsevier Science, 2006, pp. 527-621.
 
\end{thebibliography}
\end{document}